\documentclass[12pt,a4paper]{article}
\usepackage[utf8]{inputenc}
\usepackage[english]{babel}
\usepackage[T1]{fontenc}
\usepackage{amsmath}
\usepackage{amsthm}
\usepackage{amsfonts}
\usepackage{amssymb}
\usepackage{dsfont}
\usepackage{graphicx}
\usepackage{hyperref}
\usepackage{enumerate}
\usepackage{enumitem}
\usepackage{subfigure}
\usepackage{xcolor}
\allowdisplaybreaks

\usepackage{combelow}% provides \cb to place comma below character
\usepackage{placeins}

\usepackage{newunicodechar}
\newunicodechar{ș}{\cb{s}}
\newunicodechar{ț}{\cb{t}}

\newtheorem{theorem}{Theorem}[section]

\newtheorem{lemma}{Lemma}[section]
\newtheorem{proposition}{Proposition}[section]

\newtheorem{remark}{Remark}[section]

\def\R {{\mathbb R}}

\def\N {{\mathbb N}}
\def\E {{\mathbb E}}

\def\cB {{\cal B}}

\def\cU {{\cal U}}

\def\cF {{\cal F}}

\DeclareMathOperator*{\argmin}{arg\,min}

\normalbaselines

\begin{document}

\title{\bf A Nonlocal Stochastic Optimal Control with Elliptic-Type Smoothing}

\author{
\cb{S}tefana-Lucia Ani\cb{t}a{\thanks{
``Octav Mayer'' Institute of Mathematics of the Romanian Academy, Ia\cb{s}i 700505, 
Romania;}\ \thanks{Apollonia University,  Ia\c{s}i 700511, Romania; stefi\_anita@yahoo.com}}
\and
Luca Di Persio{\thanks{Department of Computer Science, University of Verona, Strada le Grazie 15, 37134 Verona, Italy;
luca.dipersio@univr.it}}
}

\date{}

\maketitle

\begin{abstract}
We study a nonlocal stochastic optimal control problem in which the control acts through the elliptic
smoothing operator $\mathcal{S}=(I-\Delta)^{-1}$ on $\mathbb{R}^d$. The state is described by the law of a controlled diffusion, 
equivalently by a controlled Fokker-Planck equation with coefficients depending on $\mathcal{S}u$. We prove the existence of an optimal control 
by the direct method in the calculus of variations. We then derive a Pontryagin-type maximum principle by spike variations, relying on sharp
properties of the operator $\mathcal{S}$. The result yields a pointwise minimisation rule for the optimal control. We discuss the meaning of the
optimal control problem in the case of population dynamics.
\end{abstract}

\paragraph*{Keywords:}{\small Stochastic differential equation, Stochastic optimal control problem, Fokker-Planck equation,
Pontryagin maximum principle, Population dynamics.}

\paragraph*{MSC:}{\small 49J20, 49K20, 49N45, 65N21, 92D25.}

 \bigskip
 
%%%%%%%%
 
\section{Introduction}
\label{sec:intro}

Controlling a stochastic system through its diffusion coefficient, and not merely through its drift, has become a central theme in
mean‐field control, mathematical finance, and population dynamics. The recent article \cite{AnitaDiPersio2025} addresses this scenario when the
control acts \emph{non‐locally} by convolution. In the present work, we replace the convolution kernel with the resolvent of
the Laplacian, thereby introducing the \emph{elliptic smoother} $\mathcal{S}$. Besides its intrinsic physical meaning - stationary
diffusion–decay of an injected substance - $\mathcal{S}$ enjoys strong compactness properties that substantially streamline the proof of the
maximum principle while preserving the modelling features of non‐local control.
\vspace{5mm}

Consider the following deterministic optimal control problem
$$\underset{u \in {\cal U}_{\beta }}{\text{Minimize}} \ J(u)\  ,\leqno({\bf P})$$
where $$J(u)=\int_0^T \int_{\mathbb{R}^d}G(t,x)\rho ^u(t,x) \, dx \, dt+\int_{\mathbb{R}^d}G_T(x)\rho ^u(T,x) \, dx
+\int_{\mathbb{R}^d}\langle x\rangle ^{2\beta }h(u(x)) \, dx,$$ 
$T\in (0,+\infty )$, $d\in \mathbb{N}^*$ and $\rho ^u$ is the weak solution to the following controlled Fokker-Planck equation:
\begin{equation}\label{FPeq}
\left\{ \begin{array}{ll}
\displaystyle {{\partial \rho}\over {\partial t}}(t,x)=-\nabla \cdot (b(x,({\cal S}u)(x))\rho (t,x))+\frac{1}{2}\frac{\partial ^2}{\partial x_i\partial x_j}(q_{ij}(x,({\cal S}u)(x))\rho (t,x)),  \vspace{2mm} \\
\hspace{7cm} x\in \mathbb{R}^d, \ t\in (0,T), \vspace{2mm} \\
\rho (0,x)=\rho _0(x), \qquad x\in \mathbb{R}^d .
                                     \end{array}
                          \right. 
\end{equation}
Model (\ref{FPeq}) describes the dynamics of a population of cells or insects that is subject to a long-range aggregation described by $-\nabla \cdot (b \, \rho)$ and to a short-range
dispersion described by $\frac{1}{2}\frac{\partial ^2}{\partial x_i\partial x_j}(q_{ij}\rho)$; $\rho (t,x)$ is the probability density of the population at moment $t$ and position $x$.

Here, $u$ is an input that belongs to the set of controllers
$${\cal U}_{\beta}=\{v\in L^{\infty}(\R^d)\cap  L^2_{\beta}(\R^d); \ 0\leq v \leq M_0 \quad \mbox{\rm a.e. } x\in \R^d\} ,$$
where $\beta , M_0>0$, $\langle x\rangle =(1+|x|^2)^{\frac{1}{2}}$,
$$L^2_{\beta}(\R^d)=\{f; \langle x \rangle ^{\beta}f\in L^2(\R^d)\}  \quad  (\int_{\R^d}(1+|x|^2)^{\beta}|f(x)|^2dx<+\infty).$$
We denote by $\|f\|_{2,\beta}=\Big(\int_{\R^d}(1+|x|^2)^{\beta}|f(x)|^2dx\Big) ^{\frac{1}{2}}$ the norm of $L^2_{\beta}(\R^d)$.

If $u(x)$ is the infusion per unit of time (the flow rate) of a substance, with a diffusion coefficient $1$ and a decay rate equal to $1$, then the density of the substance when
the equilibrium is attained is given by $\eta = \mathcal{S}u\in H^1(\R^d)$ satisfying $\eta-\Delta \eta =u$ in $\R^d$.
There exists a unique weak solution $\eta \in H^1(\R^d)$ (i.e. $\displaystyle \int_{\R^d}\left[ \eta \, \zeta+\nabla\eta\cdot\nabla\zeta \right] \, dx = \int_{\R^d} u \, \zeta \, dx, \forall \zeta \in H^1(\R^d)$) 
via Lax-Milgram's lemma.

The substance modifies both the drift and the diffusion via the scalar field ${\cal S}u$ produced by $u$.

Problem $(P)$ asks to optimally displace a population via an action produced by a constant-in time infusion of a substance. The fact that $u\in L^2_{\beta }(\R^d)$
shows that the magnitude of the control decays fast as $|x|\rightarrow \infty $ and the presence of the weight $\langle x\rangle ^{2\beta }$ in the third term of the cost functional
shows that the cost associated to the control increases fast as $|x| \rightarrow \infty $.

Assume in what follows that the following hypotheses hold.

\begin{itemize}
\item[\bf (H1)] $b\in C^{1,2}_b(\R^d\times [0,M_0];\R^d)$.
\item[\bf (H2)] $q\in C^{2}_b(\R^d\times [0,M_0];\R^{d\times d})$, $q(x,s)=q(x,s)^T$ for all $(x,s)$, and there exists $\gamma>0$ such that
$$\gamma |\xi|^2\leq \xi^Tq(x,s)\xi,\quad \forall (x,s)\in \R^d\times [0,M_0],\quad \forall \xi\in \R^d.$$
When the stochastic formulation is used, we assume in addition that there exists $\sigma\in C^2_b(\R^d\times[0,M_0];\R^{d\times N})$, where $N\in \N^*$, such that $q=\sigma\sigma^T$.

\item[\bf (H3)] $\rho _0\in L^2(\R^d)\cap L^1(\R^d)$, $\rho _0(x)\geq 0$ a.e. in $\R^d$, and $\int_{\R^d}\rho_0(x)\,dx=1$.
\end{itemize}

Consider now the following related stochastic optimal control problem
$$\underset{u \in {\cal U}_{\beta }}{\text{Minimize}} \  \mathbb{E}\Big[\int_0^T G(t,X^u(t)) \, dt+G_T(X^u(T)) \Big]
+\int_{\mathbb{R}^d}\langle x\rangle ^{2\beta }h(u(x)) \, dx,\leqno({\bf P_S})$$
where $X^u$ is the solution to the following controlled stochastic differential equation
\begin{equation}\label{SDE}
\left\{ \begin{array}{ll}
\displaystyle d \, X(t)=b(X(t),({\cal S}u)(X(t))) \, dt+\sigma (X(t),({\cal S}u)(X(t))) \, dW(t) , \quad t\in (0,T) \, , \\ 
X(0)=X_0,
                                     \end{array}
                          \right. 
\end{equation}
where $(\Omega , {\cal G},\mathbb{P})$ is a probability space endowed with a normal filtration $({\cal G}_t)_{t\in [0,T]}$ satisfying the usual conditions and $(W(t))_{t\in [0,T]}$ 
is an $\R^N-({\cal G}_t)_{t\in [0,T]}$ Brownian motion and $X_0$ is an
$\R^d$-random variable of density $\rho _0$ satisfying $\E [|X_0|^2]<\infty $. One assumes that
$\sigma \in C^{2}_b(\R^d\times [0,M_0];\R^{d\times N})$ and that $q=\sigma \sigma ^T$.

Actually, (\ref{SDE}) describes the trajectory of an individual of the above mentioned population.

The properties of $\mathcal{S}$ will show that for any $u\in {\cal U}_{\beta }$, (\ref{SDE}) has a unique solution $X^u(t), \, t\in [0,T]$ (see \cite{Evans}, \cite{oksendal}) and that $X^u(t)$ has a probability density $\rho ^u(t,x)$
which is the unique weak solution to (\ref{FPeq}) (see \cite{RoecknerZhang2010}). Moreover, we assume that $G$, $G_T$ and $h$ satisfy

\begin{itemize}
\item[\bf (H4)] $G\in L^2((0,T)\times \mathbb{R}^d)$ and $G_T\in L^2(\mathbb{R}^d)$, with
$$G(t,x)\geq 0 \ \mbox{\rm a.e. in } (0,T)\times \mathbb{R}^d, \quad G_T(x)\geq 0 \ \mbox{\rm a.e. in } \mathbb{R}^d.$$
Moreover, $h:[0,M_0]\longrightarrow [0,+\infty)$ is convex and continuous and $h(0)=0$.
\end{itemize}
The condition $h(0)=0$ describes that the zero control has a zero associated cost and guarantees a finite cost. Thus, the infimum in problem $(P)$ is finite. 

So,
$$\mathbb{E}\Big[\int_0^T G(t,X^u(t)) \, dt+G_T(X^u(T)) \Big]+\int_{\R^d}\langle x\rangle^{2\beta}h(u(x)) \, dx$$
$$=\int_0^T \int_{\mathbb{R}^d}G(t,x)\rho ^u(t,x) \, dx \, dt+\int_{\mathbb{R}^d}G_T(x)\rho ^u(T,x) \, dx+\int_{\R^d}\langle x\rangle^{2\beta}h(u(x)) \, dx$$
and this implies that $(P)$ and $(P_S)$ are equivalent.

Therefore, we get a double perspective (stochastic and deterministic) on the problem of the optimal displacement of a population. We shall focus in what follows on the problems of proving the 
existence of an optimal control for problem $(P)$ and on deriving Pontryagin's principle. For other recent papers where the investigation of a stochastic optimal control problem is reduced to that of an equivalent deterministic optimal control problem, we refer to \cite{Anita2021}, \cite{Barbu2020}, \cite{BarbuRZ2020}. For certain particular choices of $G, G_T$ and $h$ problem $(P)$ can describe an identification problem related to a Fokker-Planck equation. For some recent papers on this topic we refer to \cite{MarioAlfioJSC}, \cite{MarioAlfioMCA}. For some recent papers concerning the control of non-linear Fokker-Planck equations with the control acting nonlocally on the drift only see \cite{Anita2024_1} and \cite{anita3}. And for a McKean-Vlasov equation with controlled stochasticity we mention \cite{Dipersio}.
\vspace{3mm}

The plan of the paper is as follows:
In Section 2 we will derive the most important properties of the operator $\mathcal{S}$. In Section 3 we prove the equivalence of problems $(P_S)$ and $(P)$ and the existence of an optimal control
for problem $(P)$. Section 4 is devoted to establishing the Pontryagin principle for problem $(P)$.
Section 5 contains some final comments.
\vspace{5mm}

\bf Notations. \rm
We shall use the Einstein summation convention and the following notations:

$B_r(x_0)=B(x_0;r)=\{ x\in \R^d; \, |x-x_0|<r\}, \ B_r=B(0_d;r),$ $m_r=m(B_r)$, where $m$ is the Lebesgue measure,

$A^c=\R^d\setminus A$, 

$L^p$ instead of $L^p(\R^d)$ and $\| \cdot \|_{L^p}=\| \cdot \|_p$ for the usual norm $\| \cdot \|_{L^p(\R^d)}$,
 
$H^p$ instead of $H^p(\R^d)$ and $\| \cdot \|_{H^p}$ for the usual norm $\| \cdot \|_{H^p(\R^d)}$.

Throughout the paper, the notation $C^{1,2}_b$ means that $b$, its first spatial derivatives, its first and second derivatives with respect to the scalar variable are bounded and continuous. Similarly, $C^2_b$ for $q$ means bounded continuous derivatives up to order two in $(x,s)$. The latter convention is adopted to make explicit the coefficient bounds required in the energy estimates and in the global parabolic regularity theorem in the Appendix.

\noindent
We shall denote by the same symbols $C$, $\tilde{C}$ several positive constants appearing in the evaluations.

\section{Properties of the operator $\mathcal{S}$}

Note first that $(\mathcal{S}u)(x)=(\cB * u)(x)=\int_{\R^d}\cB (x-y)u(y) \, dy$, where $\cB$ is the Bessel kernel (we will see that $\cB$ is strictly positive, radially symmetric and satisfies $\int_{\R^d}\cB (x) \, dx = 1$).

$$\mathcal{S}u=\cF^{-1}\left( \frac{1}{1+|\xi |^2} \cF u\right) ,$$
$$\cB (x)=\frac{1}{(4\pi)^{\frac{d}{2}}}\int_0^{\infty}e^{-\theta}\theta^{- {\frac{d}{2}}}e^{-\frac{|x|^2}{4\theta}} d\theta,$$
$\cB \in L^1(\R^d)\cap C^{\infty}(\R^d\setminus \{0\})$, $\cB >0$, $\int_{\R^d}\cB = 1$; here ${\cal F}$ and ${\cal F}^{-1}$ are the Fourier transform and its inverse.

\begin{lemma}\label{lemma2.1}
For any $u\in L^2(\R^d)$ with $0\leq u \leq M_0$ a.e. we get that:
$$0\leq \eta=\mathcal{S}u \leq M_0 \quad \mbox{ a.e. in } \R^d.$$
\end{lemma}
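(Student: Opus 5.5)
Two routes are available: the convolution representation with the Bessel kernel, or a weak maximum principle via truncation in the variational formulation. I would give the variational argument as the main proof, since it uses only the Lax--Milgram formulation, and mention the kernel argument as a check.

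\emph{Lower bound.} Since $\eta\in H^1(\R^d)$, its negative part $\eta^-=\max(-\eta,0)$ also lies in $H^1(\R^d)$, with $\nabla \eta^-=-\nabla\eta\,\mathds{1}_{\{\eta<0\}}$ (Stampacchia). Take $\zeta=\eta^-$ in the weak formulation $\int_{\R^d}[\eta\zeta+\nabla\eta\cdot\nabla\zeta]\,dx=\int_{\R^d}u\zeta\,dx$. The left side equals $-\|\eta^-\|_2^2-\|\nabla\eta^-\|_2^2$, and the right side is $\int u\,\eta^-\,dx\ge 0$ because $u\ge0$. Hence $\|\eta^-\|_{H^1}^2\le 0$, so $\eta^-=0$, that is, $\eta\ge 0$ a.e.

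\emph{Upper bound.} Here there is a small obstacle: the constant $M_0$ is not in $L^2(\R^d)$, so one cannot simply subtract it and work in $H^1$. I would use the test function $\zeta=(\eta-M_0)^+$ instead. It belongs to $H^1(\R^d)$ because $(\eta-M_0)^+\le|\eta|$ pointwise (as $M_0>0$), and its gradient is $\nabla\eta\,\mathds{1}_{\{\eta>M_0\}}$. Inserting it gives
$$\int_{\R^d}\eta\,(\eta-M_0)^+\,dx+\|\nabla(\eta-M_0)^+\|_2^2=\int_{\R^d}u\,(\eta-M_0)^+\,dx\le M_0\int_{\R^d}(\eta-M_0)^+\,dx .$$
The last integral is finite: $(\eta-M_0)^+$ is supported in $\{\eta>M_0\}$, a set of finite measure by Chebyshev, and $(\eta-M_0)^+\in L^2$. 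Moving the right side to the left yields $\|(\eta-M_0)^+\|_2^2+\|\nabla(\eta-M_0)^+\|_2^2\le 0$, so $\eta\le M_0$ a.e.

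\emph{Alternative check.} Writing $\eta=\cB*u$ with $\cB>0$ and $\int\cB=1$ gives $0\le\eta(x)\le M_0\int\cB=M_0$ directly. However, the kernel properties of $\cB$ are only announced at this point in the paper, not proved, so the variational proof above is the one I would present.
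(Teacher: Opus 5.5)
Your proof is correct, but your main argument is not the one in the paper. The paper's proof is exactly your ``alternative check'', namely $0\le\int_{\R^d}\cB(x-y)u(y)\,dy\le M_0\int_{\R^d}\cB=M_0$. You instead stay inside the Lax--Milgram weak formulation and use Stampacchia truncation, and both halves are sound. In the upper bound you correctly handle the two whole-space issues: $(\eta-M_0)^+\in H^1(\R^d)$ because $M_0>0$, and $(\eta-M_0)^+\in L^1(\R^d)$ (finite-measure support by Chebyshev, then Cauchy--Schwarz), which is what justifies merging $\int\eta(\eta-M_0)^+$ with $M_0\int(\eta-M_0)^+$.

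As for what each route buys: the kernel proof is a one-liner and gives the bound at every point rather than a.e.\ (the convolution of $\cB\in L^1$ with $u\in L^\infty$ is defined everywhere and continuous, which the paper exploits in the remark that follows). It even gives $\eta>0$ everywhere when $u\not\equiv0$. Its only cost is the kernel facts, and your reason for demoting it is weaker than you suggest, since these follow in two lines from the subordination formula. $\cB$ is an $e^{-\theta}\,d\theta$-average of positive Gaussian heat kernels of unit mass, so $\cB>0$ and $\int\cB=\int_0^\infty e^{-\theta}\,d\theta=1$ by Tonelli, while convolution with $\cB$ has Fourier multiplier $\int_0^\infty e^{-\theta(1+|\xi|^2)}\,d\theta=(1+|\xi|^2)^{-1}$.

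Your variational proof, in exchange, is self-contained relative to the definition of $\mathcal{S}$ given in the introduction and never uses an explicit kernel. It therefore carries over verbatim to operators such as $(I-\nabla\cdot(A(x)\nabla))^{-1}$ with bounded, uniformly elliptic $A$, where no closed-form kernel is available.
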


\begin{proof}
We have that:
$$0\leq (\mathcal{S}u)(x)=\int_{\R^d}\cB (x-y)u(y) \, dy \leq M_0 \int_{\R^d}\cB (x-y) dy = M_0$$ 
a.e. $x\in \R^d$.
\end{proof}
\vspace{3mm}

\begin{lemma}\label{lemma2.2}
For any $\beta >0$ there exists a constant $C_{\beta} >0$ such that for any $u\in L^2_{\beta}(\R^d)$ we have that
$$\|\mathcal{S}u\|_{2,\beta}\leq C_{\beta}\|u\|_{2,\beta} \quad \mbox{ and } \quad \|\nabla(\mathcal{S}u)\|_{2,\beta}\leq C_{\beta}\|u\|_{2,\beta}.$$
This implies that $\mathcal{S}:L^2_{\beta}(\R^d)\rightarrow H^1_{\beta}(\R^d)$ is a bounded operator.
\end{lemma}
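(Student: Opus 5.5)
We will prove both bounds with a weighted energy estimate on the weak formulation $\eta-\Delta\eta=u$, taking the weight $w(x)=\langle x\rangle^{2\beta}$. The point is that $\nabla w$ is small relative to $w$:
$$|\nabla w(x)| = 2\beta |x|\langle x\rangle^{2\beta-2}\le 2\beta\,\langle x\rangle^{2\beta-1}\le 2\beta\, w(x).$$
The factor $2\beta$ is not small, however, so we rescale the weight. We use $w_\lambda(x)=(1+\lambda^2|x|^2)^{\beta}$ with $\lambda\in(0,1]$. This satisfies $|\nabla w_\lambda|\le 2\beta\lambda\, w_\lambda$ and is equivalent to $w$:
$$\lambda^{2\beta}w\le w_\lambda\le w.$$
So it suffices to prove the estimate with $w_\lambda$ for one fixed small $\lambda$ (say $\lambda=1/(4\beta)$, capped at $1$), at the cost of a constant depending on $\beta$.

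\textbf{Step 1 (formal estimate).} Test the equation with $\zeta=w_\lambda\eta$. Then
$$\int w_\lambda\eta^2+\int w_\lambda|\nabla\eta|^2+\int \eta\,\nabla\eta\cdot\nabla w_\lambda=\int w_\lambda u\eta .$$
For the cross term, Cauchy--Schwarz with the gradient bound gives
$$\Big|\int \eta\,\nabla\eta\cdot\nabla w_\lambda\Big|\le 2\beta\lambda\int w_\lambda|\eta||\nabla\eta|\le \beta\lambda\int w_\lambda(\eta^2+|\nabla\eta|^2).$$
With $\beta\lambda\le 1/4$ this is absorbed into the left side. The right side is bounded by $\tfrac14\int w_\lambda\eta^2+\int w_\lambda u^2$. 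Hence
$$\int w_\lambda(\eta^2+|\nabla\eta|^2)\le C\int w_\lambda u^2\le C\int w\, u^2 .$$
Using the lower equivalence $w_\lambda\ge\lambda^{2\beta}w$ then yields both inequalities, with $C_\beta$ of order $\lambda^{-\beta}$.

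\textbf{Step 2 (justification, the main obstacle).} The test function $w_\lambda\eta$ need not belong to $H^1$, because $w_\lambda$ is unbounded. We will therefore truncate the weight: take $w_{\lambda,R}(x)=(1+\lambda^2\min(|x|,R)^2)^{\beta}$, which is bounded and Lipschitz. It satisfies the same pointwise bound $|\nabla w_{\lambda,R}|\le 2\beta\lambda\, w_{\lambda,R}$ a.e., since the gradient is either the original one or zero. Consequently $w_{\lambda,R}\eta\in H^1$ is admissible, because $\eta\in H^1$ by Lax--Milgram. Step 1 then runs verbatim and gives a bound uniform in $R$. Letting $R\to\infty$ by monotone convergence completes the argument.

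Boundedness of $\mathcal{S}:L^2_\beta\to H^1_\beta$ follows immediately, with norm $\|\eta\|_{2,\beta}+\|\nabla\eta\|_{2,\beta}$. As an alternative route, one could use the Bessel kernel's exponential decay together with Peetre's inequality $\langle x\rangle^{\beta}\le 2^{\beta/2}\langle x-y\rangle^{\beta}\langle y\rangle^{\beta}$ and Young's inequality. That route, however, requires $\langle\cdot\rangle^\beta\nabla\cB\in L^1$, which holds but needs kernel asymptotics. The energy method avoids this.
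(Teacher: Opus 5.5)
Your proof is correct, and it takes a genuinely different route from the paper's.

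The paper works with the kernel representation $\mathcal{S}u=\cB*u$. The Peetre-type inequality of Proposition \ref{appendix1} gives the pointwise domination $\langle x\rangle^{\beta}|(\mathcal{S}u)(x)|\le C_\beta\bigl(\cB_\beta*(\langle\cdot\rangle^{\beta}|u|)\bigr)(x)$ with $\cB_\beta=\langle\cdot\rangle^{\beta}\cB\in L^1$, and Young's inequality finishes. The gradient bound is obtained the same way from $\langle\cdot\rangle^{\beta}\nabla\cB\in L^1$.

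You instead stay entirely within the Lax--Milgram weak formulation and test with $w_{\lambda,R}\eta$. The rescaling makes $|\nabla w_\lambda|\le 2\beta\lambda w_\lambda$ small enough to absorb the cross term. The truncation makes the test function admissible and keeps all integrals finite before you absorb. Each step checks out: the truncated weight is in $W^{1,\infty}$, the gradient bound persists a.e., the estimate is uniform in $R$, and monotone convergence closes the argument. The constant you get is explicit, of order $\lambda^{-\beta}$.

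Your route needs no information on $\cB$ and handles $\eta$ and $\nabla\eta$ in a single estimate. It would also carry over verbatim to $I-\nabla\cdot(A\nabla)$ with bounded measurable uniformly elliptic $A$, where no explicit kernel is available. The paper's route yields a pointwise bound, hence the weighted estimate in every $L^p$, $1\le p\le\infty$. It also reuses the kernel representation the paper needs anyway (Lemmas \ref{lemma2.1} and \ref{lemma2.4}).

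One small correction to your closing aside: the kernel route does not really require asymptotics of $\cB$. Insert the subordination formula and apply Fubini, as in the paper's computation of $\|\nabla\cB\|_{L^1}$ in the Remark. This gives $\int_{\R^d}|x|^{\beta}\cB(x)\,dx=C\int_0^\infty e^{-\theta}\theta^{\beta/2}\,d\theta<\infty$ and $\int_{\R^d}|x|^{\beta}|\nabla\cB(x)|\,dx\le C\int_0^\infty e^{-\theta}\theta^{(\beta-1)/2}\,d\theta<\infty$.
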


\begin{proof}
$$\langle x \rangle ^{\beta} |(\mathcal{S}u)(x)|\leq \int_{\R^d}\langle x \rangle ^{\beta} \cB (x-y)|u(y)| \, dy $$
and by Proposition \ref{appendix1} (in Appendix)
$$\leq C_{\beta} \int_{\R^d} \langle x-y \rangle ^{\beta} \cB (x-y) \langle y \rangle ^{\beta} |u(y)| \, dy = C_{\beta}\left( \cB _{\beta}*(\langle y \rangle ^{\beta}|u(y)|)\right)(x)$$
(here $\cB _{\beta} = \langle \cdot \rangle ^{\beta} \cB \in L^1(\R^d)$).
By Young's inequality, we get that
$$\|\mathcal{S}u\|_{2,\beta}\leq C_{\beta}\|\cB _{\beta}\|_{L^1}\|u\|_{2,\beta}.$$
Analogously we obtain that $\|\nabla (\mathcal{S}u)\|_{2,\beta}\leq C_{\beta}\|\langle \cdot \rangle ^{\beta} \nabla \cB\|_{L^1}\|u\|_{2,\beta}$. 
\end{proof}

\noindent
\begin{remark} 
For any $u\in L^{\infty}(\R^d)$ we have that $\mathcal{S}u \in C^1_b(\R^d)$.
\end{remark}

\begin{proof}
For any $x_0,x\in \R^d$:
$$|(\mathcal{S}u)(x)-(\mathcal{S}u)(x_0)|=\left| \int_{\R^d}\left[ \cB (x-y)-\cB (x_0-y)\right]u(y) \, dy\right| $$
$$\leq \| u \|_{L^{\infty}}\int_{\R^d}|\cB (x-y)-\cB (x_0-y)|dy \rightarrow 0 \mbox{ as } x\rightarrow x_0$$ 
(we have used a change of variable $y-x_0=z$ to obtain the convergence).
$$\Rightarrow \mathcal{S}u\in C(\R^d).$$

\noindent
On the other hand, we have:
$$|(\mathcal{S}u)(x)|=\left| \int_{\R^d}\cB (x-y)u(y) dy\right| \leq \|u\|_{L^{\infty}}\int_{\R^d}\cB (x-y) \, dy=\|u \|_{L^{\infty}}$$
$$\Rightarrow \mathcal{S}u\in C_b(\R^d).$$

\noindent
We have $\nabla (\mathcal{S}u)(x)=\int_{\R^d}\nabla \cB (x-y)u(y) \, dy$, where 

$\nabla \cB (x)=C\displaystyle \int^{\infty}_0e^{-\theta}\theta ^{- \frac{d}{2}}e^{-\frac{|x|^2}{4\theta}}\frac{-x}{2\theta} d\theta$ with $C=\displaystyle \frac{1}{(4\pi)^{\frac{d}{2}}}$.

\noindent
So, 
$$\int_{\R^d}|\nabla\cB (x)| dx\leq C\int^{\infty}_0 e^{-\theta}\theta ^{- \frac{d}{2}}\int_{\R^d} e^{-\frac{|x|^2}{4\theta}}\frac{|x|}{2\theta} \, dx \, d\theta .$$ 
We do a change of variable $\frac{x}{2\sqrt{\theta}} =y \Rightarrow dx = (2\sqrt{\theta})^d dy$ and get:
$$\int_{\R^d}|\nabla\cB (x)| \, dx\leq C\int^{\infty}_0 e^{-\theta}\theta ^{- \frac{d}{2}}\int_{\R^d} e^{-|y|^2}|y| \frac{1}{\theta^{\frac{1}{2}}}(2\sqrt{\theta})^d \, dy \, d\theta$$
$$=\tilde{C}\int^{\infty}_0 e^{-\theta}\theta^{-\frac{1}{2}} \,  d\theta \, \int_{\R^d} e^{-|y|^2}|y| \, dy < +\infty$$
$$\Rightarrow \nabla \cB \in L^1(\R^d;\R^d).$$

It follows analogously that $\nabla (\mathcal{S}u) \in C_b(\R^d) \Rightarrow \mathcal{S}u \in C^1_b(\R^d).$
\end{proof}
\vspace{3mm}

\begin{lemma}\label{lemma2.3}
Let $\beta >0$. The following compact embedding holds
$$H^1(\R^d)\cap L^2_{\beta }(\R^d) \hookrightarrow L^2(\R^d) \, . $$
\end{lemma}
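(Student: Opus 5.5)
We argue by sequences, splitting $\R^d$ into a large ball $B_R$ and its complement: on $B_R$ we use the Rellich--Kondrachov theorem, and on $B_R^c$ the weight $\langle x\rangle^{2\beta}$ makes the tail uniformly small. Let $(f_n)$ be bounded in $H^1(\R^d)\cap L^2_{\beta}(\R^d)$, normed by $\|f\|_{H^1}+\|f\|_{2,\beta}$, say $\|f_n\|_{H^1}+\|f_n\|_{2,\beta}\le K$. We must extract a subsequence that is Cauchy, hence convergent, in $L^2(\R^d)$. Continuity of the embedding is immediate, since $\langle x\rangle\ge 1$ gives $\|f\|_2\le \|f\|_{2,\beta}$.

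\emph{Tail estimate.} For $|x|\ge R$ we have $\langle x\rangle^{2\beta}\ge (1+R^2)^{\beta}$. Hence
\begin{equation*}
\int_{B_R^c}|f_n|^2\,dx\le (1+R^2)^{-\beta}\int_{B_R^c}\langle x\rangle^{2\beta}|f_n|^2\,dx\le (1+R^2)^{-\beta}K^2,
\end{equation*}
which tends to $0$ as $R\to\infty$ uniformly in $n$, because $\beta>0$.

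\emph{Local compactness.} For each $k\in\N^*$, the restrictions $f_n|_{B_k}$ are bounded in $H^1(B_k)$. The ball $B_k$ is a bounded Lipschitz domain, so Rellich--Kondrachov gives the compact embedding $H^1(B_k)\hookrightarrow L^2(B_k)$. We apply it successively on $B_1, B_2,\dots$ and take a diagonal subsequence, still denoted $(f_n)$. This subsequence converges in $L^2(B_k)$ for every $k$.

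\emph{Conclusion.} Fix $\varepsilon>0$ and choose $k$ with $4(1+k^2)^{-\beta}K^2<\varepsilon^2/2$. Then
\begin{equation*}
\|f_n-f_m\|_2^2\le \|f_n-f_m\|_{L^2(B_k)}^2+2\int_{B_k^c}\big(|f_n|^2+|f_m|^2\big)\,dx.
\end{equation*}
The second term is at most $4(1+k^2)^{-\beta}K^2<\varepsilon^2/2$ by the tail estimate. The first term is below $\varepsilon^2/2$ for $n,m$ large, by local convergence on $B_k$. So the subsequence is Cauchy in $L^2(\R^d)$, and the embedding is compact.

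The argument needs neither a limit identification nor Lemma~\ref{lemma2.2}. The only points requiring care are the diagonal extraction and the fact that the weight decay is uniform in $n$, which the tail estimate already supplies. An alternative is the Kolmogorov--Riesz criterion: $L^2$-equicontinuity of translations follows from $\|f(\cdot+h)-f\|_2\le |h|\,\|\nabla f\|_2$, and tightness follows from the tail estimate. We would use the ball-exhaustion argument above as the main proof.
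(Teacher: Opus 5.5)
Your proof is correct and takes essentially the same approach as the paper: Rellich--Kondrachov on the exhausting balls $B_k$, a diagonal subsequence, and a tail bound on $B_R^c$, uniform in $n$, coming from the weight $\langle x\rangle^{2\beta}$. Your bookkeeping is slightly tidier, since you show directly that the diagonal subsequence is Cauchy and use the bound $(1+R^2)^{-\beta}$ instead of $R^{-2\beta}$.
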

\begin{proof}
Let $(f_n)$ be bounded in $H^1(\R^d)\cap L^2_{\beta}(\R^d)$. Let $R>0$ be arbitrary and fixed.
By Rellich-Kondrachov's theorem, we get that $f_n\rightarrow f$ in $L^2(B_R)$ on a subsequence (also denoted by $(f_n)$).

On the other hand, we have:

$$\int_{B_R^c}|f_n|^2 dx \leq R^{-2\beta}\int_{B_R^c}\langle x \rangle ^{2\beta}|f_n|^2 dx \leq R^{-2\beta}\sup_n \|f_n\|^2_{2,\beta}.$$

Thus we get $(f_{1_n})$ a subsequence convergent to $f$ on $L^2(B_1)$, $(f_{2_n})$ a subsequence of the previous sequence which converges to $f$ on $L^2(B_2)$, etc.
We choose $(f_{n_n})$ convergent to $f$ in $L^2(B_R), \forall R>0$.

We have that $\forall \varepsilon >0$, there exists a $R_\varepsilon \in \N^*$ such that $R_\varepsilon ^{-2\beta} \sup_k \|f_k\|^2_{2,\beta} < \frac{\varepsilon^2}{4}$.
So, $\|f_k\|^2_{L^2(B_{R_\varepsilon}^c)}< \frac{\varepsilon^2}{4}$.
Moreover, there exists $n_\varepsilon \in \N$ such that $\forall n \geq n_\varepsilon$ we have that $\|f_{n_n}-f\|^2_{L^2(B_{R_\varepsilon})}<\frac{\varepsilon^2}{4}$. Thus we obtain that $(f_{n_n})$ is a Cauchy sequence in $L^2(\R^d)$ and therefore convergent to $f$ in $L^2(\R^d)$.
\end{proof}
\vspace{2mm}

\begin{lemma}\label{lemma2.4}
Let $d\in \{ 1,2,3\}$ and let $f_{\varepsilon }\in L^{\infty }(\R^d)$ satisfy
$$supp \ f_{\varepsilon }\subset B_{\varepsilon }(x_0), \quad \| f_{\varepsilon }\|_{L^{\infty }(\R^d)}\leq M .$$
Let $\eta _{\varepsilon }=\mathcal{S}f_{\varepsilon }$. Then there exists a constant $C$ independent of $x_0\in \R^d$, $M>0$ and $\varepsilon >0$, such 
that for all sufficiently small $\varepsilon $
$$\| \eta _{\varepsilon }\|^2_{H^1(\R^d)}\leq C \, M^2m_{\varepsilon }^{\frac{3}{2}}.$$
Moreover, if $d=3$, the following sharper estimate holds
$$\| \eta _{\varepsilon }\|^2_{H^1(\R^d)}\leq C \, M^2m_{\varepsilon }^{\frac{5}{3}}.$$
If $d=2$, it holds
$$\| \eta _{\varepsilon }\|^2_{H^1(\R^d)}\leq C \, M^2m_{\varepsilon }^2(1+|\log \varepsilon |),$$
and if $d=1$:
$$\| \eta _{\varepsilon }\|^2_{H^1(\R^d)}\leq C \, M^2m_{\varepsilon }^2.$$
\end{lemma}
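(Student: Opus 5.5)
The approach is to compute the $H^1$ norm of $\eta_\varepsilon$ exactly through the weak formulation, reducing it to a double integral of the Bessel kernel over $B_\varepsilon(x_0)\times B_\varepsilon(x_0)$. Taking $\zeta=\eta_\varepsilon$ in the weak formulation $\int[\eta\zeta+\nabla\eta\cdot\nabla\zeta]=\int f_\varepsilon\zeta$ gives
$$\|\eta_\varepsilon\|^2_{H^1}=\int_{\R^d}f_\varepsilon\,\eta_\varepsilon\,dx=\int_{\R^d}\int_{\R^d}\cB(x-y)f_\varepsilon(x)f_\varepsilon(y)\,dy\,dx .$$
Equivalently, by Plancherel this is $\int |\cF f_\varepsilon|^2(1+|\xi|^2)^{-1}d\xi$.

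Since $\cB>0$, $|f_\varepsilon|\le M$ and $\operatorname{supp} f_\varepsilon\subset B_\varepsilon(x_0)$, we get
$$\|\eta_\varepsilon\|^2_{H^1}\le M^2\int_{B_\varepsilon(x_0)}\int_{B_\varepsilon(x_0)}\cB(x-y)\,dy\,dx\le M^2\, m_\varepsilon\int_{B_{2\varepsilon}}\cB(z)\,dz .$$
The last step uses the translation $z=x-y$: for fixed $x$, the set $\{x-y:\ y\in B_\varepsilon(x_0)\}$ lies in $B_{2\varepsilon}$. Since $\cB$ is translation invariant, the bound does not depend on $x_0$. Everything therefore reduces to estimating $\int_{B_{2\varepsilon}}\cB$ for small $\varepsilon$, say $2\varepsilon<1$.

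The key step, and the only real work, is a sharp local bound on $\cB$ near the origin in each dimension. For $d=1$ and $d=3$ the kernel is explicit: evaluating the $\theta$-integral (or solving $\eta-\eta''=\delta$ and $\eta-\Delta\eta=\delta$ radially) gives
$$\cB(z)=\tfrac12 e^{-|z|}\quad (d=1),\qquad \cB(z)=\frac{e^{-|z|}}{4\pi|z|}\quad (d=3).$$
For $d=2$ we have $\cB(z)=\frac1{4\pi}\int_0^\infty e^{-\theta}\theta^{-1}e^{-r^2/(4\theta)}d\theta$ with $r=|z|<1$. We split the integral into the ranges $\theta<r^2$, $r^2<\theta<1$ and $\theta>1$. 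The first piece is bounded via the substitution $s=r^2/\theta$. The second is at most $\int_{r^2}^1\theta^{-1}d\theta=2|\log r|$. The third is at most $1$. This yields $\cB(z)\le C(1+|\log|z||)$.

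Integrating these bounds over $B_{2\varepsilon}$ gives:
\begin{itemize}
\item $d=1$: $\int_{B_{2\varepsilon}}\cB\le C\varepsilon\sim C m_\varepsilon$, hence $\|\eta_\varepsilon\|^2_{H^1}\le CM^2m_\varepsilon^2$.
\item $d=2$: $\int_{B_{2\varepsilon}}(1+|\log r|)\,r\,dr\le C\varepsilon^2(1+|\log\varepsilon|)$, hence $\|\eta_\varepsilon\|^2_{H^1}\le CM^2m_\varepsilon^2(1+|\log\varepsilon|)$.
\item $d=3$: $\int_{B_{2\varepsilon}}|z|^{-1}dz=C\varepsilon^2$, hence $\|\eta_\varepsilon\|^2_{H^1}\le CM^2\varepsilon^5=CM^2m_\varepsilon^{5/3}$.
\end{itemize}

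Finally, the uniform estimate with exponent $3/2$ follows from the three cases for small $\varepsilon$, since $m_\varepsilon\to0$. For $d=1$, $m_\varepsilon^2\le m_\varepsilon^{3/2}$. For $d=2$, $m_\varepsilon^{1/2}(1+|\log\varepsilon|)\sim \varepsilon(1+|\log\varepsilon|)\to0$. For $d=3$, $m_\varepsilon^{5/3}\le m_\varepsilon^{3/2}$ once $m_\varepsilon<1$. I expect the main obstacle to be the careful $d=2$ logarithmic bound on $\cB$. The other cases rest on the explicit formulas, which can be checked either by direct integration in $\theta$ or by uniqueness of the decaying fundamental solution.
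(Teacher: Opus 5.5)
Your proposal is correct and follows essentially the same route as the paper. You rewrite the energy identity $\|\eta_\varepsilon\|^2_{H^1}=\int_{\R^d} f_\varepsilon\eta_\varepsilon\,dx$ as a double integral of $\cB$ over $B_\varepsilon(x_0)\times B_\varepsilon(x_0)$, integrate the local singularity bounds on $\cB$ over $B_{2\varepsilon}$ in each dimension, and compare with $m_\varepsilon^{3/2}$; the only difference is that you justify the kernel bounds (explicit kernels for $d=1,3$, the three-range splitting of the $\theta$-integral for $d=2$), which the paper merely asserts.
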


\begin{proof}
Since $\eta _{\varepsilon }=\mathcal{S}f_{\varepsilon }$ the weak formulation of $(I-\Delta )\eta _{\varepsilon }=f_{\varepsilon }$ gives
$$\| \eta_{\varepsilon }\|^2_{H^1(\R^d)}=\int_{\R^d}f_{\varepsilon }\eta _{\varepsilon } \, dx=\int_{B_{\varepsilon }(x_0)}
\int_{B_{\varepsilon }(x_0)}f_{\varepsilon }(x)\mathcal{B}(x-y)f_{\varepsilon }(y) \, dy \, dx ,$$
thus
$$\| \eta_{\varepsilon }\|^2_{H^1(\R^d)}\leq M^2\int_{B_{\varepsilon }(x_0)}
\int_{B_{\varepsilon }(x_0)}\mathcal{B}(x-y) \, dy \, dx .$$
The Bessel kernel is translation invariant and satisfies the following local bounds near the origin
$$\mathcal{B}(z)\leq C \quad \mbox{\rm if } d=1, $$
$$\mathcal{B}(z)\leq C(1+|\log |z||) \quad \mbox{\rm if } d=2, $$
$$\mathcal{B}(z)\leq C(1+|z|^{-1}) \quad \mbox{\rm if } d=3 $$
(where $C$ is a positive constant) 
and the exponential decay at infinity is irrelevant, since both variables are integrated over the same small ball.

For $d=1$, the kernel is bounded near the origin, therefore
$$\int_{B_{\varepsilon }(x_0)}\int_{B_{\varepsilon }(x_0)}\mathcal{B}(x-y) \, dy \, dx \leq C \, m^2_{\varepsilon }.$$

For $d=2$, after translating $x_0$ to the origin, we have
$$\int_{B_{\varepsilon }}\int_{B_{\varepsilon }}\mathcal{B}(x-y) \, dy \, dx \leq C \int_{B_{\varepsilon }}\int_{B_{\varepsilon }}(1+|\log |x-y||) \, dy \, dx,$$
where, for each $x\in B_{\varepsilon }$, the inner integral is bounded by
$$C \int_{B_{2\varepsilon }}(1+|\log |z||) \, dz \leq C \varepsilon ^2(1+|\log \varepsilon |).$$
Then we get that 
$$\int_{B_{\varepsilon }}\int_{B_{\varepsilon }}\mathcal{B}(x-y) \, dy \, dx \leq C\varepsilon ^4(1+|\log \varepsilon |)=Cm^2_{\varepsilon }(1+|\log \varepsilon |) .$$

For $d=3$, the same argument gives 
$$\int_{B_{\varepsilon }}\int_{B_{\varepsilon }}\mathcal{B}(x-y) \, dy \, dx\leq C\int_{B_{\varepsilon }}\int_{B_{\varepsilon }}(1+|x-y|^{-1}) \, dy \, dx.$$
For each fixed $x\in B_{\varepsilon }$,
$$\int_{B_{\varepsilon }}|x-y|^{-1} \, dy \leq \int_{B_{2\varepsilon }}|z|^{-1}dz\leq C \, \varepsilon ^2,$$
hence
$$\int_{B_{\varepsilon }}\int_{B_{\varepsilon }}\mathcal{B}(x-y) \, dy \, dx\leq C \, \varepsilon ^3\varepsilon ^2=C \, m^{5/3}_{\varepsilon }.$$
Therefore, for sufficiently small $\varepsilon $, the bounds in dimensions one, two, and three are all bounded by $C \, m^{3/2}_{\varepsilon }$, proving the lemma.

\end{proof}

\vspace{3mm}

\begin{theorem}\label{theorem2.5}
Let $\beta >0$, $u_n, u\in \mathcal{U}_{\beta }$ ($n\in \mathbb{N}^*$) with $u_n\rightharpoonup u$ weakly in $L^2_{\beta }(\R^d)$. Then
$$\eta _n=\mathcal{S}u_n\rightarrow \mathcal{S}u=\eta \quad \mbox{\rm in } H^1(\R^d), \qquad \eta _n\rightarrow \eta \quad \mbox{\rm in } C^1_{loc}(\R^d) \, .$$
\end{theorem}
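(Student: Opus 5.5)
The plan is to prove strong $H^1$ convergence first, by combining compactness of $\mathcal{S}$ from Lemma \ref{lemma2.2} and Lemma \ref{lemma2.3} with the energy identity for $I-\Delta$. Then $C^1_{loc}$ convergence will follow from an Arzelà--Ascoli argument on the convolution representation with the Bessel kernel.

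\emph{Step 1: weak convergence and identification of the limit.} Since $u_n\rightharpoonup u$ weakly in $L^2_\beta$, the sequence $(u_n)$ is bounded in $L^2_\beta$. By Lemma \ref{lemma2.2}, $(\eta_n)$ is bounded in $H^1_\beta$, hence in $H^1\cap L^2_\beta$. Because $\mathcal{S}$ is bounded and linear from $L^2_\beta$ into $H^1$, it is weak--weak continuous, so $\eta_n\rightharpoonup\eta$ weakly in $H^1$. By Lemma \ref{lemma2.3}, every subsequence of $(\eta_n)$ has a further subsequence converging strongly in $L^2$, and the limit must be $\eta$. Hence $\eta_n\to\eta$ in $L^2$ for the whole sequence.

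\emph{Step 2: strong $H^1$ convergence.} Test the weak formulation of $(I-\Delta)(\eta_n-\eta)=u_n-u$ with $\zeta=\eta_n-\eta\in H^1$. This gives
$$\|\eta_n-\eta\|_{H^1}^2=\int_{\R^d}(u_n-u)(\eta_n-\eta)\,dx\le \|u_n-u\|_{2}\,\|\eta_n-\eta\|_{2}.$$
Since $\|u_n-u\|_2\le \|u_n-u\|_{2,\beta}$ is bounded, the right-hand side tends to $0$ by Step 1. This step is essentially routine once the compact embedding is available; the weight $\beta>0$ is what supplies the tightness.

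\emph{Step 3: $C^1_{loc}$ convergence.} This is the step I expect to need the most care, because $H^1$ convergence alone does not control derivatives pointwise. I will use the representations $\eta_n=\mathcal{B}*u_n$ and $\nabla\eta_n=\nabla\mathcal{B}*u_n$. Here $\mathcal{B},\nabla\mathcal{B}\in L^1$, as shown in the Remark, and $\|u_n\|_\infty\le M_0$.

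\emph{Pointwise convergence.} For fixed $x$, the functions $y\mapsto \mathcal{B}(x-y)$ and $y\mapsto\nabla\mathcal{B}(x-y)$ lie in $L^1$. The sequence $u_n$ is bounded in $L^\infty$ and converges weakly in $L^2$, hence weakly-$*$ in $L^\infty$ (the weak-$*$ limit coincides with $u$ by density of $L^1\cap L^2$ in $L^1$). Testing against these $L^1$ functions gives $\eta_n(x)\to\eta(x)$ and $\nabla\eta_n(x)\to\nabla\eta(x)$ for every $x$.

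\emph{Equicontinuity.} As in the Remark, we have
$$|\nabla\eta_n(x)-\nabla\eta_n(x')|\le M_0\int_{\R^d}|\nabla\mathcal{B}(z+x-x')-\nabla\mathcal{B}(z)|\,dz,$$
which tends to $0$ as $x'\to x$, uniformly in $n$, by continuity of translations in $L^1$. The same estimate holds for $\eta_n$ with $\mathcal{B}$ in place of $\nabla\mathcal{B}$.

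\emph{Conclusion.} Pointwise convergence plus uniform equicontinuity on each compact set $K$ upgrades to uniform convergence on $K$ via Arzelà--Ascoli, with uniqueness of the limit giving convergence of the full sequence. This yields $\eta_n\to\eta$ in $C^1(K)$ for every compact $K$.
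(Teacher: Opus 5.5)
Your proof is correct, and your Steps 1--2 are essentially the paper's Steps 1--3. The paper also gets $L^2$ compactness from Lemma \ref{lemma2.2} and Lemma \ref{lemma2.3}, then concludes with the energy identity $\|\eta_n-\eta\|_{H^1}^2=\int_{\R^d}(u_n-u)(\eta_n-\eta)\,dx$. The only differences are cosmetic: the paper identifies the limit by passing to the limit in the weak formulation rather than by weak--weak continuity of $\mathcal{S}$, and it runs the subsequence argument at the $H^1$ level.

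The genuine difference is the $C^1_{loc}$ part. The paper uses interior elliptic regularity: from $\|u_n\|_{L^\infty(B_{2R})}\le M_0$ it gets $\|\eta_n\|_{W^{2,q}(B_R)}\le C_{R,q}$ for $q>d$ (Calder\'on--Zygmund). It then uses the compact embedding $W^{2,q}(B_R)\hookrightarrow C^1(\overline{B_R})$ and a subsequence argument. You instead work directly with the Bessel kernel. Since $\mathcal{B},\nabla\mathcal{B}\in L^1$ and $u_n\stackrel{*}{\rightharpoonup}u$ in $L^\infty$, you get convergence of $\eta_n$ and $\nabla\eta_n$ at every point. $L^1$-continuity of translations gives a modulus of continuity uniform in both $n$ and $x$, so Arzel\`a--Ascoli closes the argument.

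Your route is more elementary and self-contained given the Remark. For this step it needs no $L^q$ regularity theory and not even the weight $\beta$, only weak-$*$ convergence in $L^\infty$. The paper's argument for this step uses only the local equation $\eta_n-\Delta\eta_n=u_n$ plus an $L^\infty$ bound, not the explicit kernel. It would therefore carry over to variable-coefficient elliptic smoothers, and it gives the stronger uniform $C^{1,\alpha}(\overline{B_R})$ bounds.
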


\begin{proof} ~

\begin{itemize}
\item[\bf Step 1] Since 
$$\| \eta_n\|_{L^2}^2+\| \nabla \eta _n\| _{L^2}^2=\int_{\R^d}u_n\eta _n \, dx \leq \frac{1}{2} \| u_n\| _{L^2}^2+\frac{1}{2}\| \eta _n\|^2_{L^2}$$
it follows that $\{ \eta _n\}_{n\in \mathbb{N}^*}$ is bounded in $H^1(\R^d)$.

On the other hand by Lemma 2.2 we have that $\{ \eta _n\}_{n\in \mathbb{N}^*}$ is bounded in $L^2_{\beta }(\R^d)$ and so we get via Lemma 2.3 that
$$\eta _n \longrightarrow \tilde{\eta } \quad \mbox{\rm strongly in } L^2(\R^d) \ \mbox{\rm and weakly in } H^1(\R^d)$$
(on a subsequence).

\item[\bf Step 2] Since 
$\int_{\R^d}(\eta_n\varphi+\nabla \eta_n\cdot \nabla\varphi )dx=\int_{\R^d}u_n\varphi \,dx$, $\forall \varphi \in H^1(\R^d)$, it follows that 
$$\int_{\R^d}(\tilde{\eta}\varphi+\nabla \tilde{\eta }\cdot \nabla \varphi )dx=\int_{\R^d}u\varphi \,dx, \ \forall \varphi \in H^1(\R^d) \Rightarrow \tilde{\eta }=\mathcal{S}u=\eta \, .$$

\item[\bf Step 3] We have that $\| \eta _n-\eta \|^2_{H^1}=\int_{\R^d}(u_n-u)(\eta_n-\eta) \, dx \longrightarrow 0$ (because $\{ u_n-u\}_{n\in \mathbb{N}^*}$ is bounded in $L^2(\R^d)$
and $\{ \eta_n-\eta \}_{n\in \mathbb{N}^*}$ converges to $0$ in $L^2(\R^d)$) and so we get that
$\eta _n\longrightarrow \eta $ in $H^1(\R^d)$ on a subsequence.

Arguing by contradiction, we conclude that 
$$\eta _n\longrightarrow \eta \quad \mbox{\rm in } H^1(\R^d) \quad \mbox{\rm (the whole sequence)} .$$

\item[\bf Step 4] Consider an arbitrary but fixed $R>0$. Fix the ball $B_{2R}$.
Since $\|u_n\|_{L^{\infty }(B_{2R})}\leq M_0, \ \forall n\in \mathbb{N}^*$ we get via the interior elliptic regularity
that 
$$\| \eta _n\| _{W^{2,q}(B_R)} \leq C_{R,q}, \quad \forall q>d$$
($C_{R,q}$ is a positive constant depending only on $R, q$ and $M_0$). By the Sobolev embedding, we conclude that
$$\eta _n\longrightarrow \bar{\eta}=\eta \quad \mbox{\rm in } C^1(\overline{B_R}) \ \mbox{\rm on a subsequence}.$$
Arguing by contradiction, we conclude that the whole sequence satisfies
$$\eta _n\longrightarrow \eta \quad \mbox{\rm in } C^1(\overline{B_R}), \ \forall R>0.$$
\end{itemize}
\end{proof}

\section{Equivalence of problems $(P_S)$ and $(P)$ and existence of an optimal control}

\begin{theorem}
For any $u\in\cU_{\beta}$ there exists a unique weak solution $\rho^u \in L^2(0,T;H^1(\R^d))\cap W^{1,2}(0,T;H^{-1}(\R^d))$ for
\begin{equation*}
\left\{ \begin{array}{ll}
\displaystyle \frac{\partial\rho}{\partial t}(t,x)=-\nabla\cdot (b(x,(\mathcal{S}u)(x))\rho(t,x))+\frac{1}{2}\frac{\partial ^2}{\partial x_i \partial x_j}(q_{ij}(x,(\mathcal{S}u)(x))\rho(t,x)), & (t,x)\in (0,T)\times \R^d, \\
\rho(0,x)=\rho_0(x), & x\in \R^d.
\end{array} \right.
\end{equation*}
\end{theorem}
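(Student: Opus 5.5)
Fix $u\in\cU_\beta$ and set $\eta=\mathcal{S}u$. By Lemma \ref{lemma2.1} and the Remark following Lemma \ref{lemma2.2}, $\eta\in C^1_b(\R^d)$ with $0\le\eta\le M_0$. By (H1)--(H2) and the chain rule, the composed coefficients
$$\tilde b(x)=b(x,\eta(x)),\qquad \tilde q_{ij}(x)=q_{ij}(x,\eta(x))$$
are bounded and uniformly elliptic with constant $\gamma$. Moreover $\nabla\tilde q_{ij}=\partial_x q_{ij}+\partial_s q_{ij}\,\nabla\eta$ is bounded and continuous. This is exactly the regularity needed to rewrite the equation in divergence form:
$$\frac{\partial^2}{\partial x_i\partial x_j}(\tilde q_{ij}\rho)=\partial_i\big(\tilde q_{ij}\partial_j\rho+(\partial_j\tilde q_{ij})\rho\big).$$
The weak formulation then becomes
$$\Big\langle \frac{\partial\rho}{\partial t},\varphi\Big\rangle+a(\rho,\varphi)=0\quad\forall\varphi\in H^1,\ \text{a.e. } t,$$
with bilinear form
$$a(\rho,\varphi)=\int_{\R^d}\Big[\tfrac12\tilde q_{ij}\partial_j\rho\,\partial_i\varphi+\tfrac12(\partial_j\tilde q_{ij})\rho\,\partial_i\varphi-\tilde b\cdot\nabla\varphi\,\rho\Big]dx .$$
I will take this as the definition of weak solution in $L^2(0,T;H^1)\cap W^{1,2}(0,T;H^{-1})$ with $\rho(0)=\rho_0\in L^2$, which is allowed by (H3).

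Next I verify the hypotheses of the J.-L. Lions theorem for the Gelfand triple $H^1\subset L^2\subset H^{-1}$. The form $a$ is bounded on $H^1\times H^1$ with constant depending on $\|\tilde q\|_\infty$, $\|\nabla\tilde q\|_\infty$ and $\|\tilde b\|_\infty$, which are controlled by the $C^2_b$ and $C^{1,2}_b$ norms and by $\|\nabla\eta\|_\infty\le\|\nabla\cB\|_{L^1}M_0$. For the Gårding inequality, ellipticity gives $\frac\gamma2\|\nabla\rho\|_2^2$. The lower-order terms are bounded by $C\|\rho\|_2\|\nabla\rho\|_2\le\frac\gamma4\|\nabla\rho\|_2^2+C'\|\rho\|_2^2$. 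Hence
$$a(\rho,\rho)+\lambda\|\rho\|_2^2\ge\frac\gamma4\|\rho\|_{H^1}^2$$
for a suitable $\lambda$. The coefficients are independent of $t$, so measurability in time is trivial. Lions' theorem, applied after the substitution $\rho=e^{\lambda t}w$ to remove the shift $\lambda$, yields existence and uniqueness of $\rho^u$ in the stated space, with $\rho^u\in C([0,T];L^2)$ so that the initial condition makes sense. Alternatively, one could derive the energy estimate $\frac{d}{dt}\|\rho\|_2^2+\frac\gamma2\|\nabla\rho\|_2^2\le C\|\rho\|_2^2$ and run a Faedo--Galerkin scheme, but invoking Lions' theorem directly is cleaner.

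Uniqueness follows from the same energy estimate applied to the difference of two solutions, which has zero initial data, combined with Gronwall's lemma. This uses the Lions--Magenes identity $\frac{d}{dt}\|\rho\|_2^2=2\langle\rho',\rho\rangle$, valid in $W^{1,2}(0,T;H^{-1})\cap L^2(0,T;H^1)$.

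The main obstacle is justifying the divergence form. It requires $\nabla_x[\tilde q_{ij}]$ to be bounded, which relies on $\eta\in C^1_b$ (the Remark) together with $q\in C^2_b$. Without this, the second-order term with coefficient $\tilde q$ inside the derivatives could not be handled in $H^1$. A minor additional point is to check that the solution coincides with the law density of $X^u$, but that claim is not part of this statement.
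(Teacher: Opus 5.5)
Your proposal is correct and follows essentially the paper's route. Like the paper, you use $\mathcal{S}u\in C^1_b(\R^d)$ and $q\in C^2_b$ to put the second-order term in divergence form, check that the resulting bilinear form on $H^1(\R^d)$ is bounded and satisfies a G{\aa}rding inequality, and then apply Lions' theorem; your version simply writes out details the paper only lists, namely the explicit constants, the exponential shift, and the energy-based uniqueness argument.
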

\begin{proof}
Indeed, let $a: H^1(\R^d)\times H^1(\R^d) \rightarrow \R$ given by:
$$a(\phi,\psi)=-\int_{\R^d}b(x,(\mathcal{S}u)(x)) \phi (x) \cdot \nabla \psi (x)dx+\frac{1}{2}\int_{\R^d}\frac{\partial}{\partial x_j}(q_{ij}(x,(\mathcal{S}u)(x))\phi (x))\frac{\partial \psi}{\partial x_i}(x)dx$$
$$=-\int_{\R^d}b(x,(\mathcal{S}u)(x)) \phi (x) \cdot \nabla \psi (x)dx+\frac{1}{2}\int_{\R^d}q_{ij}(x,(\mathcal{S}u)(x))\frac{\partial \phi}{\partial x_j}(x)\frac{\partial \psi}{\partial x_i}(x)dx$$
$$+\frac{1}{2}\int_{\R^d}\left[ \frac{\partial q_{ij}}{\partial x_j} (x,(\mathcal{S}u)(x))+\frac{\partial q_{ij}}{\partial s}(x,(\mathcal{S}u)(x))\frac{\partial (\mathcal{S}u)}{\partial x_j}(x) \right] \phi (x)\frac{\partial \psi}{\partial x_i}(x)dx.$$

The properties of $q_{ij}$ and the fact that $\mathcal{S}u \in C^1_b(\R^d)$ imply that:
\begin{itemize}
\item $a$ is bilinear;
\item $a$ is bounded, i.e. $\exists M>0$: $|a(\phi , \psi)|\leq M\|\phi \|_{H^1}\|\psi \|_{H^1}$, $\forall \phi , \psi \in H^1(\R^d)$;
\item there exist $\alpha _0>0$, $\nu _0 \in \R$ such that:
$$a(\phi ,\phi )\geq \alpha _0 \|\phi \|^2_{H^1}-\nu _0\|\phi\|^2_{L^2}, \quad \forall \phi \in H^1(\R^d).$$
\end{itemize}
We may conclude (via Lions' theorem - see \cite{Brezis2005}, \cite{Lions}) that there exists a unique weak solution $\rho ^u$.
\end{proof}

\begin{remark} Moreover, by multiplying the Fokker-Planck equation by $(\rho ^u)^-$ we get after a standard evaluation that $\rho ^u(t,x)\geq 0$ a.e. $x \in \R^d, \forall t \in [0,T]$.

For any $u\in {\cal U}_{\beta}$, it follows as in \cite{Anita2021} that $\rho ^u$ is also a distributional solution to (\ref{FPeq}), that $\rho ^u\in C([0,T];L^1(\mathbb{R}^d))$ (and so, it is t-narrowly continuous) and that 
$\rho ^u(t)$ is a probability density for any $t\in [0,T]$. By the superposition principle (see \cite{Figalli2008}, \cite{Huang2021}, \cite{Trevisan2016}) we get that there exists a unique (in law) probabilistically
weak solution $(\tilde{X}^u(t))_{t\in [0,T]}$ to (\ref{SDE}) such that $\rho ^u(t)$ is the probability density of $\tilde{X}^u(t)$ for any $t\in [0,T]$.  
\end{remark}

Assume that $X_0$ has the probability density $\rho _0$ and that $\E[|X_0|^2]<\infty$, then for any $u\in {\cal U}_{\beta }$, problem (\ref{SDE}) admits a unique (strong) solution $(X^u(t))_{t\in [0,T]}$.

By Theorem 1.3 in \cite{RoecknerZhang2010} we get that $\rho ^u(t)$ is the probability density for $X^u(t)$ for any $t\in [0,T]$ as well. It follows that
$$\mathbb{E}\Big[\int_0^T G(t,X^u(t)) \, dt+G_T(X^u(T)) \Big]+\int_{\R^d}\langle x\rangle^{2\beta}h(u(x)) \, dx$$
$$=\int_0^T \int_{\mathbb{R}^d}G(t,x)\rho ^u(t,x) \, dx \, dt+\int_{\mathbb{R}^d}G_T(x)\rho ^u(T,x) \, dx+\int_{\R^d}\langle x\rangle^{2\beta}h(u(x)) \, dx.$$
We conclude that $(P)$ and $(P_S)$ are equivalent, with $X^u$ being the (strong) solution to (\ref{SDE}).

\vspace{3mm}

\begin{proposition}\label{proposition3.4}
Let $u_n,u\in\cU_\beta$ and assume that $u_n\rightharpoonup u$ weakly in $L^2_\beta(\R^d)$. Then
$$\rho^{u_n}\longrightarrow \rho^u \quad \mbox{in } C([0,T];L^2(\R^d)),$$
and
$$\rho^{u_n}\longrightarrow \rho^u \quad \mbox{in } L^2(0,T;H^1(\R^d)).$$
In particular, $\rho^{u_n}(T)\to \rho^u(T)$ strongly in $L^2(\R^d)$.
\end{proposition}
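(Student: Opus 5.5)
Write $\eta_n=\mathcal{S}u_n$, $\eta=\mathcal{S}u$, $\rho_n=\rho^{u_n}$, $\rho=\rho^u$, and $w_n=\rho_n-\rho$. The plan is an energy estimate for $w_n$, in which the difference of the coefficients acts as a source term. That source is then shown to vanish using Theorem \ref{theorem2.5}.

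\textbf{Step 1: uniform bounds.} By Lemma \ref{lemma2.1} we have $0\le\eta_n\le M_0$. By the Remark following Lemma \ref{lemma2.2}, $\|\nabla\eta_n\|_\infty\le\|\nabla\mathcal{B}\|_{L^1}M_0$ uniformly in $n$. Hence the bilinear forms $a_n$, built as in the existence theorem with $\eta_n$ in place of $\mathcal{S}u$, satisfy the continuity and Gårding constants $M,\alpha_0,\nu_0$ uniformly in $n$. Testing the equation for $\rho_n$ with $\rho_n$ and applying Gronwall gives
$$\sup_{t}\|\rho_n(t)\|_2^2+\int_0^T\|\rho_n\|_{H^1}^2\,dt\le C\|\rho_0\|_2^2,$$
with $C$ independent of $n$.

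\textbf{Step 2: equation for $w_n$.} The difference satisfies $w_n'+A_nw_n=(A-A_n)\rho$ in $H^{-1}$, with $w_n(0)=0$. Testing with $w_n$ and using the uniform Gårding inequality gives
$$\tfrac12\tfrac{d}{dt}\|w_n\|_2^2+\alpha_0\|w_n\|_{H^1}^2\le\nu_0\|w_n\|_2^2+|a(\rho,w_n)-a_n(\rho,w_n)|.$$
The difference $a-a_n$ consists of integrals of the form $\int(b(x,\eta)-b(x,\eta_n))\rho\cdot\nabla w_n$, the analogous $q_{ij}$ terms with $\partial_j\rho$ and with $\partial_{x_j}q_{ij}$, and the term
$$\int\big[\partial_sq_{ij}(x,\eta)\partial_j\eta-\partial_sq_{ij}(x,\eta_n)\partial_j\eta_n\big]\rho\,\partial_iw_n .$$
Define
$$\delta_n(t)=\|(|\eta-\eta_n|+|\nabla\eta-\nabla\eta_n|)(|\rho(t)|+|\nabla\rho(t)|)\|_{L^2}.$$
By Lipschitz continuity in $s$ from (H1) and (H2), together with boundedness of $\nabla\eta$, we get $|a-a_n|\le C\delta_n(t)\|w_n\|_{H^1}$. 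Young's inequality and Gronwall then yield
$$\sup_t\|w_n\|_2^2+\int_0^T\|w_n\|_{H^1}^2\,dt\le C\int_0^T\delta_n(t)^2\,dt .$$
This single bound gives both claimed convergences, and the convergence of $\rho_n(T)$.

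\textbf{Step 3: the main obstacle, $\int_0^T\delta_n^2\,dt\to0$.} Theorem \ref{theorem2.5} gives $\eta_n\to\eta$ only in $H^1$ and in $C^1_{loc}$, not in $W^{1,\infty}(\R^d)$. I would split the integral. On $B_R$, local uniform $C^1$ convergence bounds the contribution by $\|\eta_n-\eta\|_{C^1(\overline{B_R})}^2\int_0^T\|\rho\|_{H^1}^2\,dt\to0$. On $B_R^c$, the factor $|\eta-\eta_n|+|\nabla\eta-\nabla\eta_n|$ is uniformly bounded by $C M_0$ (Lemma \ref{lemma2.1} and the Remark). The contribution there is therefore at most $C\int_0^T\int_{B_R^c}(|\rho|^2+|\nabla\rho|^2)\,dx\,dt$, a tail of a fixed integrable function, which tends to $0$ as $R\to\infty$ by dominated convergence. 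Choose $R$ first and then $n$ large; this gives $\int_0^T\delta_n^2\,dt\to0$.

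Since the limit is identified without extracting subsequences, the whole sequence converges. The point to watch is that the whole argument is carried out in the weak formulation on $H^1$. In particular, the product $\partial_s q\,\nabla\eta_n$ must be handled through the $L^\infty$ bound on $\nabla\eta_n$ rather than through its $H^1$ convergence alone.
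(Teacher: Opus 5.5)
Your proposal is correct and follows essentially the same route as the paper: an energy estimate for $\rho^{u_n}-\rho^u$ in which the coefficient differences act on the fixed $\rho^u$ as a source, then Gronwall from zero initial data, and finally the vanishing of that source. The vanishing uses the uniform $L^\infty$ bounds on $\eta_n$ and $\nabla\eta_n$ together with the $C^1_{loc}$ convergence from Theorem \ref{theorem2.5}. Your $B_R$/$B_R^c$ splitting is an explicit version of the paper's dominated-convergence step, and your Step 1 is not needed, since only $\rho^u$ (not $\rho^{u_n}$) enters the source term.
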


To simplify the notation, when no confusion is possible, we write $F(\mathcal{S}u)$ instead of $F(x,(\mathcal{S}u)(x))$.

\begin{proof}
Let
\[
\eta_n:=\mathcal S u_n,\qquad \eta:=\mathcal S u.
\]
By Theorem \ref{theorem2.5}, $\eta_n\to \eta$ in $H^1(\R^d)$ and in $C^1_{\rm loc}(\R^d)$. Since $0\le u_n,u\le M_0$ and $\nabla\mathcal B\in L^1(\R^d;\R^d)$, the sequence $\nabla \eta_n$ is uniformly bounded in $L^\infty(\R^d;\R^d)$. Hence
\[
 b(\eta_n),\quad q(\eta_n),\quad \nabla_x q(\eta_n),\quad \frac{\partial q}{\partial s}(\eta_n),\quad \frac{\partial q}{\partial s}(\eta_n)\nabla\eta_n
\]
converge pointwise a.e. to the corresponding quantities evaluated at $\eta$, and they are uniformly bounded.

Set $z_n:=\rho^{u_n}-\rho^u$. We write
\[
B_n=b(\eta_n),\qquad B=b(\eta),\qquad Q_n=q(\eta_n),\qquad Q=q(\eta),
\]
and
\[
D_{i,n}:=\frac{\partial Q_{ij,n}}{\partial x_j}
=\frac{\partial q_{ij}}{\partial x_j}(\eta_n)+\frac{\partial q_{ij}}{\partial s}(\eta_n)\frac{\partial \eta_n}{\partial x_j},\qquad
D_i:=\frac{\partial Q_{ij}}{\partial x_j}.
\]
The weak formulation of the difference equation gives, for every $\varphi\in H^1(\R^d)$,
\begin{align*}
\left\langle \frac{d z_n}{dt},\varphi\right\rangle_{H^{-1},H^1}
&=\int_{\R^d}B_nz_n\cdot\nabla\varphi \,dx
-\frac12\int_{\R^d}Q_{ij,n}\frac{\partial z_n}{\partial x_j}\frac{\partial \varphi}{\partial x_i}\,dx
-\frac12\int_{\R^d}D_{i,n}z_n\frac{\partial \varphi}{\partial x_i}\,dx
\\
&\quad+\int_{\R^d}(B_n-B)\rho^u\cdot\nabla\varphi\,dx
-\frac12\int_{\R^d}(Q_{ij,n}-Q_{ij})\frac{\partial \rho^u}{\partial x_j}\frac{\partial \varphi}{\partial x_i}\,dx
\\
&\quad-\frac12\int_{\R^d}(D_{i,n}-D_i)\rho^u\frac{\partial \varphi}{\partial x_i}\,dx .
\end{align*}
Taking $\varphi=z_n(t)$, using the uniform ellipticity of $q$, the boundedness of the lower-order coefficients, and Young's inequality, we obtain
\begin{align*}
\frac12\frac{d}{dt}\|z_n(t)\|_{L^2}^2+c_0\|\nabla z_n(t)\|_{L^2}^2
&\le C\Big( \|z_n(t)\|_{L^2}^2
+\|(B_n-B)\rho^u(t)\|_{L^2}^2\\
&\quad+\|(Q_n-Q)\nabla\rho^u(t)\|_{L^2}^2
+\|(D_n-D)\rho^u(t)\|_{L^2}^2\Big),
\end{align*}
where $c_0>0$ and $C>0$ are independent of $n$. Since $z_n(0)=0$, Gronwall's lemma yields
\begin{align*}
\sup_{t\in[0,T]}\|z_n(t)\|_{L^2}^2+\int_0^T\|\nabla z_n(t)\|_{L^2}^2\,dt
&\le C\Big[ \int_0^T\|(B_n-B)\rho^u(t)\|_{L^2}^2\,dt\\
&\quad+\int_0^T\|(Q_n-Q)\nabla\rho^u(t)\|_{L^2}^2\,dt\\
&\quad+\int_0^T\|(D_n-D)\rho^u(t)\|_{L^2}^2\,dt\Big].
\end{align*}
The three terms on the right-hand side tend to zero by dominated convergence. Indeed, the coefficient differences converge pointwise a.e. and are uniformly bounded, whereas $\rho^u\in L^2((0,T)\times\R^d)$ and $\nabla\rho^u\in L^2((0,T)\times\R^d;\R^d)$, hence
\[
\sup_{t\in[0,T]}\|z_n(t)\|_{L^2}\to0,\qquad
\int_0^T\|\nabla z_n(t)\|_{L^2}^2\,dt\to0\,,
\]
which proves the asserted convergence in $C([0,T];L^2(\R^d))$ and in $L^2(0,T;H^1(\R^d))$.
\end{proof}
\vspace{2mm}

\subsection{Existence of an optimal control}

We assume, in addition to hypotheses (H1)-(H4), that
\begin{itemize}
\item[\bf (H4)'] there exists $\alpha>0$ such that
$$h(s)\geq \frac{\alpha}{2}s^2,\quad \forall s\in[0,M_0].$$
\end{itemize}

\begin{theorem}\label{existence_optimal_control}
Under hypotheses (H1)-(H4) and (H4)', problem $(P)$ admits at least one optimal control $u^*\in\cU_\beta$.
\end{theorem}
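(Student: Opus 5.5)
We follow the direct method in the calculus of variations. Let $d^*=\inf_{u\in\cU_\beta}J(u)$. Since $h\ge0$, $G,G_T\ge0$ and $\rho^u\ge 0$, we have $J\ge 0$. Moreover $J(0)=\int_0^T\!\int G\rho^0+\int G_T\rho^0(T)<\infty$ because $h(0)=0$ and $\rho^0\in C([0,T];L^2)$. Hence $d^*\in[0,+\infty)$. We take a minimizing sequence $(u_n)\subset\cU_\beta$ with $d^*\le J(u_n)\le d^*+\frac1n$.

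The first step is compactness. By (H4)', and since the first two terms of $J$ are nonnegative,
$$\frac{\alpha}{2}\|u_n\|_{2,\beta}^2=\frac{\alpha}{2}\int_{\R^d}\langle x\rangle^{2\beta}u_n^2\,dx\le \int_{\R^d}\langle x\rangle^{2\beta}h(u_n)\,dx\le d^*+1 .$$
So $(u_n)$ is bounded in the Hilbert space $L^2_\beta(\R^d)$, and along a subsequence $u_n\rightharpoonup u^*$ weakly in $L^2_\beta(\R^d)$. The set $\{v\in L^2_\beta;\ 0\le v\le M_0 \text{ a.e.}\}$ is convex and strongly closed: a strongly convergent sequence has an a.e. convergent subsequence. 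By Mazur's lemma it is therefore weakly closed. Since the bounds give $u^*\in L^\infty$, we get $u^*\in\cU_\beta$.

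The second step is passing to the limit in the state terms. By Proposition \ref{proposition3.4}, $\rho^{u_n}\to\rho^{u^*}$ in $C([0,T];L^2(\R^d))$, hence in $L^2((0,T)\times\R^d)$, and $\rho^{u_n}(T)\to\rho^{u^*}(T)$ in $L^2(\R^d)$. Because $G\in L^2((0,T)\times\R^d)$ and $G_T\in L^2(\R^d)$, the first two terms of $J(u_n)$ converge to the corresponding terms of $J(u^*)$.

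The third step is weak lower semicontinuity of $\Phi(v)=\int_{\R^d}\langle x\rangle^{2\beta}h(v(x))\,dx$ on $\cU_\beta$. This is the main point requiring care, though it is standard. $\Phi$ is convex because $h$ is convex and the weight is positive. We check that $\Phi$ is strongly lower semicontinuous on $\cU_\beta$. If $v_k\to v$ in $L^2_\beta$, pass to a subsequence that converges a.e. and realizes the $\liminf$ of $\Phi(v_k)$. Continuity of $h$ gives $\langle x\rangle^{2\beta}h(v_k)\to\langle x\rangle^{2\beta}h(v)$ a.e., with nonnegative integrands, so Fatou's lemma yields $\Phi(v)\le\liminf\Phi(v_k)$. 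A convex, strongly l.s.c. functional on a convex closed set has closed convex sublevel sets, hence weakly closed ones. Therefore $\Phi$ is weakly l.s.c. and $\Phi(u^*)\le\liminf_n\Phi(u_n)$.

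Combining the second and third steps gives $J(u^*)\le\liminf_n J(u_n)=d^*$. Since $u^*\in\cU_\beta$, also $J(u^*)\ge d^*$, so $u^*$ is an optimal control. The only delicate ingredients are the strong convergence of the states under merely weak convergence of the controls, which Proposition \ref{proposition3.4} supplies through the compactness of $\mathcal S$ from Theorem \ref{theorem2.5}, and the convexity and Fatou argument for the control cost.
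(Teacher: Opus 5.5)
Your proposal is correct and follows essentially the same direct-method argument as the paper: the $L^2_\beta$ bound from (H4)', weak compactness, convergence of the state terms via Proposition~\ref{proposition3.4}, and weak lower semicontinuity of the convex control cost. The only differences are minor. You obtain admissibility of $u^*$ from Mazur's lemma rather than from the paper's weak-$*$ convergence in $L^\infty$, and you give the Fatou-plus-convexity justification of weak lower semicontinuity, which the paper merely asserts.
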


\begin{proof}
Because $h(0)=0$ and $G,G_T\geq0$, the zero control is admissible and has finite cost. Hence
$$m:=\inf_{u\in\cU_\beta}J(u)$$
is finite and non-negative. Let $(u_n)$ be a minimizing sequence such that $J(u_n)\to m$. From (H4)' we obtain
$$\frac{\alpha}{2}\int_{\R^d}\langle x\rangle^{2\beta}|u_n(x)|^2\,dx\leq J(u_n)\leq m+1$$
for all sufficiently large $n$. Thus $(u_n)$ is bounded in $L^2_\beta(\R^d)$. It is also bounded in $L^\infty(\R^d)$ and satisfies $0\leq u_n\leq M_0$ a.e. By Banach-Alaoglu's theorem and weak compactness in the Hilbert space $L^2_\beta(\R^d)$, there exists $u^*\in L^2_\beta(\R^d)\cap L^\infty(\R^d)$ such that, up to a subsequence,
$$u_n\rightharpoonup u^*\quad \mbox{weakly in }L^2_\beta(\R^d),\qquad u_n\stackrel{*}{\rightharpoonup}u^*\quad \mbox{weakly * in }L^\infty(\R^d).$$
Therefore, $u^*\in\cU_\beta$.

By Theorem \ref{theorem2.5}, $\mathcal{S}u_n\to\mathcal{S}u^*$ in $H^1(\R^d)$ and in $C^1_{loc}(\R^d)$. By Proposition \ref{proposition3.4},
$$\rho^{u_n}\to\rho^{u^*}\quad \mbox{in }L^2((0,T)\times\R^d),\qquad \rho^{u_n}(T)\to\rho^{u^*}(T)\quad \mbox{in }L^2(\R^d).$$
Since $G\in L^2((0,T)\times\R^d)$ and $G_T\in L^2(\R^d)$, the two state-dependent terms in $J$ converge strongly by duality. Finally, the map
$$u\mapsto \int_{\R^d}\langle x\rangle^{2\beta}h(u(x))\,dx$$
is weakly lower semicontinuous on $L^2_\beta(\R^d)$, because $h$ is convex and continuous on $[0,M_0]$. Hence
$$J(u^*)\leq \liminf_{n\to\infty}J(u_n)=m.$$
Since $u^*\in\cU_\beta$, we also have $m\leq J(u^*)$, and therefore $J(u^*)=m$.
\end{proof}

\section{The Pontryagin maximum principle}
\label{sec:PMP}

Assume that besides (H1)-(H4) the following assumption holds
\begin{itemize}
\item[\bf (H5)] $\rho_0, \, G_T\in H^1(\R^d)$, $d\in \{ 1,2,3\}$ and $q_{ij}=q \, \delta_{ij}$, where $\delta _{ij}$ is the symbol of Kronecker.
\end{itemize}

Consider the associated dual problem
\begin{equation}\label{eqp}
\left\{ \begin{array}{ll}
\displaystyle \frac{\partial p}{\partial t}=-b(\mathcal{S}u^*)\cdot \nabla p-\frac{1}{2}q(\mathcal{S}u^*)\Delta p-G, \quad &(t,x)\in (0,T)\times \R^d ,\\
p(T,x)=G_T(x), & x\in \R^d ,
\end{array} \right.
\end{equation}
where $u^*$ is an optimal control for the deterministic problem $(P)$. Denote $\rho = \rho ^{u^*}$.

By Proposition \ref{appendix_parabolic_regularity} we get that the weak solutions to the state equation associated with $u^*$ and to the backward adjoint equation \eqref{eqp} admit the regularity
$$\rho,\ p\in L^2(0,T;H^2(\R^d))\cap W^{1,2}(0,T;L^2(\R^d)).$$

\begin{theorem}\label{theorem4.1} (Pontryagin's principle)
Let $u^*$ be an optimal control and let $p$ be the weak solution to \eqref{eqp}. Define
$$F_1(x):=\int_0^T \frac{\partial b}{\partial s}(x,(\mathcal{S}u^*)(x))\rho(t,x)\cdot\nabla p(t,x)\,dt,$$
and
$$F_2(x):=\int_0^T \frac{\partial q}{\partial s}(x,(\mathcal{S}u^*)(x))\rho(t,x)\Delta p(t,x)\,dt.$$
Then $F_1,F_2\in L^1(\R^d)$. Set
$$\Phi_1:=\mathcal{S}F_1=\cB*F_1,
\qquad
\Phi_2:=\mathcal{S}F_2=\cB*F_2.$$
For a.e. $x\in\R^d$ one has
$$u^*(x)\in\argmin_{w\in[0,M_0]}\left\{w\Phi_1(x)+\frac12 w\Phi_2(x)+\langle x\rangle^{2\beta}h(w)\right\}.$$
If the function $h$ is strictly convex, the minimizer is unique for a.e. $x$.
\end{theorem}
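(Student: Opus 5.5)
The argument is a spike (needle) variation of the optimal control, localized in space. First, $F_1,F_2\in L^1(\R^d)$ follows from Cauchy--Schwarz in $(t,x)$. Since $\partial_s b$ and $\partial_s q$ are bounded by (H1)--(H2), and $\rho,\nabla p,\Delta p\in L^2((0,T)\times\R^d)$ by the regularity quoted from Proposition \ref{appendix_parabolic_regularity}, we get
$$\|F_1\|_{L^1}\le C\|\rho\|_{L^2(0,T;L^2)}\|\nabla p\|_{L^2(0,T;L^2)},$$
and similarly for $F_2$. As $\cB\in L^1$, Young's inequality gives $\Phi_1,\Phi_2\in L^1$. By the symmetry of $\cB$ we also have the duality
$$\int_{\R^d} F_k\,\mathcal S v\,dx=\int_{\R^d}\Phi_k\,v\,dx$$
for every $v\in L^\infty\cap L^2_\beta$.

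Next, fix a Lebesgue point $x_0$ (common to $u^*$, $\Phi_1$, $\Phi_2$ and $\langle x\rangle^{2\beta}h(u^*)$) and a value $w\in[0,M_0]$. Define the spike control $u_\varepsilon=u^*$ off $B_\varepsilon(x_0)$ and $u_\varepsilon=w$ on $B_\varepsilon(x_0)$; it belongs to $\cU_\beta$. Set $f_\varepsilon=u_\varepsilon-u^*$, which is supported in $B_\varepsilon(x_0)$ with $\|f_\varepsilon\|_\infty\le M_0$. Put $\delta_\varepsilon=\mathcal S f_\varepsilon$ and $z_\varepsilon=\rho^{u_\varepsilon}-\rho$. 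Subtracting the state equations, $z_\varepsilon$ solves a Fokker--Planck equation with coefficients $b(\mathcal Su_\varepsilon)$, $q(\mathcal Su_\varepsilon)$ and source
$$-\nabla\cdot\big((b(\mathcal Su_\varepsilon)-b(\mathcal Su^*))\rho\big)+\tfrac12\Delta\big((q(\mathcal Su_\varepsilon)-q(\mathcal Su^*))\rho\big).$$
We test the $z_\varepsilon$-equation against $p$, test the adjoint equation \eqref{eqp} against $z_\varepsilon$, and integrate by parts in $x$ and $t$. Since $z_\varepsilon(0)=0$ and $p(T)=G_T$, this gives the exact identity
$$\int_0^T\!\!\int G z_\varepsilon+\int G_T z_\varepsilon(T)=\int_0^T\!\!\int\Big[(b(\mathcal Su_\varepsilon)-b(\mathcal Su^*))\rho^{u_\varepsilon}\cdot\nabla p+\tfrac12(q(\mathcal Su_\varepsilon)-q(\mathcal Su^*))\rho^{u_\varepsilon}\Delta p\Big].$$
The $H^2$ regularity of $\rho$ and $p$ from Proposition \ref{appendix_parabolic_regularity} is what justifies moving the Laplacian onto $p$ when $q_{ij}=q\delta_{ij}$.

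We then Taylor-expand in $s$, writing $b(\mathcal Su_\varepsilon)-b(\mathcal Su^*)=\partial_s b(\mathcal Su^*)\delta_\varepsilon+O(\delta_\varepsilon^2)$ and similarly for $q$, and split $\rho^{u_\varepsilon}=\rho+z_\varepsilon$. The leading term is
$$\int F_1\delta_\varepsilon+\tfrac12\int F_2\delta_\varepsilon=\int_{B_\varepsilon(x_0)}\big(\Phi_1+\tfrac12\Phi_2\big)(w-u^*)\,dx,$$
which is of order $m_\varepsilon$. All remaining terms must be shown to be $o(m_\varepsilon)$. This is the main obstacle, and it is the place where Lemma \ref{lemma2.4} and $d\le3$ are used. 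The energy estimate from the proof of Proposition \ref{proposition3.4}, rerun with coefficient differences bounded by $|\delta_\varepsilon|$ and $|\nabla\delta_\varepsilon|$, gives
$$\sup_t\|z_\varepsilon\|_{L^2}^2+\|\nabla z_\varepsilon\|^2_{L^2L^2}\le C\big(\|\delta_\varepsilon\|_{L^\infty}^2+\|\nabla\delta_\varepsilon\|_{L^\infty}^2\big)\le C\|\delta_\varepsilon\|_{H^1}^2\cdot(\text{local factor}).$$
Rather than relying on $L^\infty$ bounds, I would work with $\rho\in L^2(0,T;H^2)\subset L^2(0,T;L^\infty)$ for $d\le3$. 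This bounds the source in $L^2(0,T;H^{-1})$ by $C\|\delta_\varepsilon\|_{H^1}$, which by Lemma \ref{lemma2.4} is $O(m_\varepsilon^{3/4})$. The cross terms $\int\partial_s b\,\delta_\varepsilon z_\varepsilon\cdot\nabla p$ and the quadratic terms $\int\delta_\varepsilon^2\rho\,|\Delta p|$ are then bounded by products such as $\|\delta_\varepsilon\|_{L^\infty}\|z_\varepsilon\|\,\|\nabla p\|$, or by $\|\delta_\varepsilon\|_{L^4}^2\|\rho\Delta p\|_{L^1_tL^2_x}$. Each of these is $O(m_\varepsilon^{3/4}\cdot m_\varepsilon^{3/4})=O(m_\varepsilon^{3/2})=o(m_\varepsilon)$, using the embedding $H^1\subset L^4$ for $d\le3$. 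If that embedding proves insufficient, I would fall back on the sharper dimension-specific bounds of Lemma \ref{lemma2.4}.

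Finally, optimality gives $J(u_\varepsilon)-J(u^*)\ge0$. The cost term contributes
$$\int_{B_\varepsilon}\langle x\rangle^{2\beta}\big(h(w)-h(u^*)\big)\,dx.$$
Dividing by $m_\varepsilon$ and letting $\varepsilon\to0$ at the Lebesgue point yields
$$(w-u^*(x_0))\big(\Phi_1+\tfrac12\Phi_2\big)(x_0)+\langle x_0\rangle^{2\beta}\big(h(w)-h(u^*(x_0))\big)\ge0.$$
To make the exceptional null set independent of $w$, first take $w$ in a countable dense subset of $[0,M_0]$, then pass to all $w$ by continuity of $h$; this is the argmin statement. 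If $h$ is strictly convex, the function being minimized is strictly convex in $w$, so the minimizer is unique.
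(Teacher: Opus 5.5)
Your proposal is correct and follows essentially the same route as the paper. The shared steps are:
\begin{itemize}
\item a spike variation at a common Lebesgue point;
\item the duality identity with the adjoint $p$ (your version with $\rho^{u_\varepsilon}$ becomes the paper's four-term identity once you write $\rho^{u_\varepsilon}=\rho+z_\varepsilon$);
\item the bound $\|\mathcal S\delta_\varepsilon\|_{H^1}\le Cm_\varepsilon^{3/4}$ from Lemma \ref{lemma2.4}, combined with $H^1\subset L^4$ and $H^2\subset W^{1,4}\cap L^\infty$, to get $\|z_\varepsilon\|=O(m_\varepsilon^{3/4})$ and all remainders $o(m_\varepsilon)$;
\item Fubini with the symmetry of $\cB$ to identify $\Phi_1$ and $\Phi_2$.
\end{itemize}

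The remaining differences are cosmetic. Your countable-dense-$w$ step is harmless but unnecessary: the Lebesgue-point set you chose already does not depend on $w$, because $u^*$ is bounded. The preliminary $L^\infty$ estimate you set aside should simply be dropped, since $\|\cdot\|_{H^1}$ does not control $\|\cdot\|_{L^\infty}$ for $d=2,3$.
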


\begin{proof}
The proof is carried out at an arbitrary point $x_0$ which is simultaneously a Lebesgue point of $u^*$, $h\circ u^*$, $\Phi_1$, and $\Phi_2$, which is enough, since these functions are locally integrable and the set of such points has full Lebesgue measure.

Fix $w\in[0,M_0]$ and define the spike perturbation
$$u_\varepsilon(x)=w\mathds{1}_{B_\varepsilon(x_0)}(x)+u^*(x)\mathds{1}_{B_\varepsilon(x_0)^c}(x),
\qquad \delta_\varepsilon:=u_\varepsilon-u^*=(w-u^*)\mathds{1}_{B_\varepsilon(x_0)}.$$
Let $\rho_\varepsilon:=\rho^{u_\varepsilon}$ and $z_\varepsilon:=\rho_\varepsilon-\rho$. Since $u^*$ is optimal
\begin{equation}\label{ineqzeps_blue}
0\leq J(u_\varepsilon)-J(u^*)
=\int_0^T\int_{\R^d}Gz_\varepsilon\,dx \, dt+
\int_{\R^d}G_Tz_\varepsilon(T)\,dx+
\int_{B_\varepsilon(x_0)}\langle x\rangle^{2\beta}(h(w)-h(u^*(x)))\,dx.
\end{equation}
The last term satisfies
\begin{equation}\label{control_term_blue}
\frac1{m_\varepsilon}\int_{B_\varepsilon(x_0)}\langle x\rangle^{2\beta}(h(w)-h(u^*(x)))\,dx
\longrightarrow
\langle x_0\rangle^{2\beta}(h(w)-h(u^*(x_0))).
\end{equation}

The duality identity obtained by multiplying the equation for $z_\varepsilon$ by the adjoint $p$ and integrating by parts is (after an easy calculation)
\begin{align}\label{duality_blue}
&\int_0^T\int_{\R^d}Gz_\varepsilon\,dx \, dt+
\int_{\R^d}G_Tz_\varepsilon(T)\,dx \\
&=\int_0^T\int_{\R^d}(b(\mathcal{S}u_\varepsilon)-b(\mathcal{S}u^*))\rho\cdot\nabla p\,dx \,dt
+\frac12\int_0^T\int_{\R^d}(q(\mathcal{S}u_\varepsilon)-q(\mathcal{S}u^*))\rho\Delta p\,dx \, dt \nonumber\\
&\quad +\int_0^T\int_{\R^d}(b(\mathcal{S}u_\varepsilon)-b(\mathcal{S}u^*))z_\varepsilon\cdot\nabla p\,dx \, dt
+\frac12\int_0^T\int_{\R^d}(q(\mathcal{S}u_\varepsilon)-q(\mathcal{S}u^*))z_\varepsilon\Delta p\,dx \, dt. \nonumber
\end{align}
The integrations by parts are justified by the regularity quoted before the theorem.

We first estimate the state increment. In particular, the difference $z_\varepsilon$ solves the equation obtained by subtracting the two Fokker-Planck equations, and the forcing terms are produced by the coefficient differences
\[
 b(\mathcal S u_\varepsilon)-b(\mathcal S u^*),\qquad
 q(\mathcal S u_\varepsilon)-q(\mathcal S u^*),\qquad
 \nabla \left(q(\mathcal S u_\varepsilon)\right)-\nabla (q(\mathcal S u^*))\,,
\]
hence, by the mean value theorem and the boundedness assumptions on the coefficients, each of these differences is bounded by a constant times either $|\mathcal S\delta_\varepsilon|$ or $|\nabla (\mathcal S\delta_\varepsilon)|$, and Lemma \ref{lemma2.4} gives
\[
\|\mathcal S\delta_\varepsilon\|_{H^1(\R^d)}\le C m_\varepsilon^{3/4},
\]
for all sufficiently small $\varepsilon$. Since $d\le3$, the Sobolev embedding $H^1(\R^d)\subset L^4(\R^d)$ also gives
\[
\|\mathcal S\delta_\varepsilon\|_{L^4(\R^d)}\le C m_\varepsilon^{3/4}.
\]
Testing the equation for $z_\varepsilon$ by $z_\varepsilon$, using the uniform ellipticity of $q$, and applying Young's inequality, we obtain
\begin{align*}
\|z_\varepsilon\|_{L^\infty(0,T;L^2)}^2+\|z_\varepsilon\|_{L^2(0,T;H^1)}^2
&\le C\|\mathcal S\delta_\varepsilon\|_{L^4}^2
\left(\|\rho\|_{L^2(0,T;L^4)}^2+\|\nabla\rho\|_{L^2(0,T;L^4)}^2\right)
\\
&\quad+C\|\nabla(\mathcal S\delta_\varepsilon )\|_{L^2}^2\|\rho\|_{L^2(0,T;L^\infty)}^2 .
\end{align*}
The right-hand side is bounded by $C m_\varepsilon^{3/2}$, because $\rho\in L^2(0,T;H^2(\R^d))$ and $H^2(\R^d)\subset W^{1,4}(\R^d)\cap L^\infty(\R^d)$ for $d\le3$, hence
\begin{equation*}
\|z_\varepsilon\|_{L^\infty(0,T;L^2(\R^d))}+\|z_\varepsilon\|_{L^2(0,T;H^1(\R^d))}\le C m_\varepsilon^{3/4}\,.
\end{equation*}
Consequently,
\begin{equation}\label{z_terms_vanish_blue}
\frac1{m_\varepsilon}\left|\int_0^T\int_{\R^d}(b(\mathcal{S}u_\varepsilon)-b(\mathcal{S}u^*))z_\varepsilon\cdot\nabla p\,dx\ dt\right|\to0\,,
\end{equation}
and
\begin{equation}\label{z_terms_vanish_q_blue}
\frac1{m_\varepsilon}\left|\int_0^T\int_{\R^d}(q(\mathcal{S}u_\varepsilon)-q(\mathcal{S}u^*))z_\varepsilon\Delta p\,dx\ dt\right|\to0.
\end{equation}
Indeed, for the drift term,
\begin{align*}
\frac1{m_\varepsilon}\left|\int_0^T\int_{\R^d}(b(\mathcal S u_\varepsilon)-b(\mathcal S u^*))z_\varepsilon\cdot\nabla p\,dx\ dt\right|
&\le \frac{C}{m_\varepsilon}\|\mathcal S\delta_\varepsilon\|_{L^4}\|z_\varepsilon\|_{L^2(0,T;L^4)}\|\nabla p\|_{L^2(0,T;L^2)}\to0,
\end{align*}
and the diffusion term is analogous, with $\Delta p$ in place of $\nabla p$.

We now identify the two first-order terms. Since $\rho\in L^2((0,T)\times\R^d)$, $\nabla p\in L^2((0,T)\times\R^d)$, and $\Delta p\in L^2((0,T)\times\R^d)$, while $\frac{\partial b}{\partial s}$ and $\frac{\partial q}{\partial s}$ are bounded, we have $F_1,F_2\in L^1(\R^d)$. The Bessel kernel $\cB$ belongs to $L^1(\R^d)$; hence $\Phi_1=\cB*F_1$ and $\Phi_2=\cB*F_2$ belong to $L^1(\R^d)$ and are defined a.e.

By the mean value theorem,
$$b(\mathcal{S}u_\varepsilon)-b(\mathcal{S}u^*)=\frac{\partial b}{\partial s}(\mathcal{S}u^*)\mathcal{S}\delta_\varepsilon+R^b_\varepsilon,$$
where
$$|R^b_\varepsilon|\leq C|\mathcal{S}\delta_\varepsilon|^2.$$
Therefore,
\begin{equation*}
\int_0^T\int_{\R^d}(b(\mathcal{S}u_\varepsilon)-b(\mathcal{S}u^*))\rho\cdot\nabla p\,dx \, dt
=\int_{\R^d}F_1(x)(\mathcal{S}\delta_\varepsilon)(x)\,dx+o(m_\varepsilon)\,,
\end{equation*}
\noindent
and the remainder is $o(m_\varepsilon)$: since $d\le3$, $H^2(\R^d)\subset W^{1,4}(\R^d)$ and
\[
\frac1{m_\varepsilon}\int_0^T\int_{\R^d}|\mathcal{S}\delta_\varepsilon|^2|\rho||\nabla p|\,dx\,dt
\le
\frac{\|\mathcal{S}\delta_\varepsilon\|_{L^4}^2}{m_\varepsilon}
\|\rho\|_{L^2(0,T;L^4)}\|\nabla p\|_{L^2(0,T;L^4)}
\le C m_\varepsilon^{1/2}\to0\,,
\]
\noindent
then, using the self-adjointness of $\mathcal{S}$, or equivalently Fubini's theorem and the symmetry of $\cB$,
$$\int_{\R^d}F_1(x)(\mathcal{S}\delta_\varepsilon)(x)\,dx
=\int_{B_\varepsilon(x_0)}(w-u^*(y))\Phi_1(y)\,dy.$$
Since $x_0$ is a Lebesgue point of $u^*$ and $\Phi_1$,
\begin{equation}\label{drift_limit_blue}
\frac1{m_\varepsilon}\int_0^T\int_{\R^d}(b(\mathcal{S}u_\varepsilon)-b(\mathcal{S}u^*))\rho\cdot\nabla p\,dx \, dt
\to (w-u^*(x_0))\Phi_1(x_0).
\end{equation}

The diffusion term is identical. We write
$$q(\mathcal{S}u_\varepsilon)-q(\mathcal{S}u^*)=\frac{\partial q}{\partial s}(\mathcal{S}u^*)\mathcal{S}\delta_\varepsilon+R^q_\varepsilon,
\qquad |R^q_\varepsilon|\leq C|\mathcal{S}\delta_\varepsilon|^2.$$
Then
\begin{equation}\label{diffusion_limit_blue}
\frac1{m_\varepsilon}\int_0^T\int_{\R^d}(q(\mathcal{S}u_\varepsilon)-q(\mathcal{S}u^*))\rho\Delta p\,dx \, dt
\to (w-u^*(x_0))\Phi_2(x_0).
\end{equation}
The proof is the same as above, with $F_2$ and $\Phi_2$ in place of $F_1$ and $\Phi_1$.

Dividing \eqref{ineqzeps_blue} by $m_\varepsilon$ and using \eqref{control_term_blue}, \eqref{duality_blue}, \eqref{z_terms_vanish_blue}, \eqref{z_terms_vanish_q_blue}, \eqref{drift_limit_blue}, and \eqref{diffusion_limit_blue}, we obtain
$$0\leq \langle x_0\rangle^{2\beta}(h(w)-h(u^*(x_0)))+(w-u^*(x_0))\Phi_1(x_0)+\frac12(w-u^*(x_0))\Phi_2(x_0).$$
Equivalently,
$$\langle x_0\rangle^{2\beta}h(u^*(x_0))+u^*(x_0)\Phi_1(x_0)+\frac12u^*(x_0)\Phi_2(x_0)
\leq
\langle x_0\rangle^{2\beta}h(w)+w\Phi_1(x_0)+\frac12w\Phi_2(x_0)$$
for every $w\in[0,M_0]$. Since $x_0$ was an arbitrary common Lebesgue point, the conclusion follows for a.e. $x\in\R^d$.
\end{proof}

\begin{remark}
For some classic works concerning the spike-perturbation technique see \cite{Barbu1994} and \cite{Sumin2009}.
\end{remark}

\section{Final comments}

An interesting control problem concerns the situation when instead of the set of controllers ${\cal U}_{\beta }$ we consider the set of controllers 
$${\cal U} =\{ v\in L^{\infty }(\R^d); \, 0\leq v(x)\leq M_0 \ \mbox{\rm a.e. } x\in \R^d, \ v(x)=0 \ \mbox{\rm if } x\in \omega ^c\} ,$$
where $\omega $ is an open and bounded subset of $\R^d$ and the problem to be investigated is
$$\underset{u \in {\cal U}}{\text{Minimize}} \  \int_0^T \int_{\mathbb{R}^d}G(t,x)\rho ^u(t,x) \, dx \, dt+\int_{\mathbb{R}^d}G_T(x)\rho ^u(T,x) \, dx
+\int_{\mathbb{R}^d}h(x,u(x)) \, dx. \leqno({\bf P}0)$$
Note that ${\cal U}$ is a convex and closed (in $L^2_{\beta}(\R^d)$) subset of ${\cal U}_{\beta }$.
\vspace{3mm}

It is worth noting that the techniques developed in the present paper can be extended to more general assumptions on $h$ and to non-scalar diffusion matrices, provided the corresponding global Sobolev regularity and duality estimates are established in the appropriate matrix-valued setting.
\vspace{3mm}

A possible continuation of the present work is the treatment of time-dependent controls \(u=u(t,x)\), with elliptic smoothing performed at each time level through \(\eta(t,\cdot)=\mathcal{S}u(t,\cdot)\), which would require compactness in both time and space for the admissible controls, and the Pontryagin principle would yield a pointwise minimisation rule in \((t,x)\). Moreover, a further substantial extension is the treatment of matrix-valued non-scalar diffusions, since, in that case, the adjoint equation involves the full Hessian \(D^2p\), and the Fokker-Planck equation contains second-order derivatives of \(q_{ij}\rho\), hence requiring a separate global Sobolev regularity analysis for the non-divergence and divergence formulations with matrix-valued coefficients.

\vspace{3mm}
A further direction is to replace the resolvent $\mathcal S=(I-\Delta)^{-1}$ by fractional elliptic smoothing operators $(I-\Delta)^{-\theta}$, $\theta>0$. Indeed, in the latter setting, the spike analysis then depends on the exact singular behaviour of the fractional Bessel kernel at the origin. Identifying the sharp range of $\theta$ for which the weak spike asymptotics still yields a Pontryagin principle would connect the present model with a broader class of nonlocal interaction mechanisms.

\section{Appendix}

\begin{proposition}\label{appendix1}
For a fixed $\beta >0$ we have that there exists $C_{\beta } >0$ such that
$$\langle x \rangle ^{\beta} \leq C_{\beta}\langle x-y \rangle ^{\beta} \langle y \rangle ^{\beta}, \quad \forall x, y \in \R^d .$$
\end{proposition}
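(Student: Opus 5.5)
This is Peetre's inequality, and I would prove it by reducing to the case $\beta=1$ and then raising to the power $\beta$. The key observation is that it suffices to show
$$\langle x\rangle^{2}\leq 2\,\langle x-y\rangle^{2}\langle y\rangle^{2},\qquad \forall x,y\in\R^d.$$
Indeed, taking this inequality to the power $\beta/2>0$ (all quantities are positive and $s\mapsto s^{\beta/2}$ is increasing on $(0,\infty)$) gives the claim with $C_\beta=2^{\beta/2}$.

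To prove the squared inequality, write $x=(x-y)+y$. By the triangle inequality and $(a+b)^2\leq 2a^2+2b^2$,
$$|x|^2\leq 2|x-y|^2+2|y|^2 .$$
Hence
$$1+|x|^2\leq 1+2|x-y|^2+2|y|^2\leq 2\bigl(1+|x-y|^2\bigr)\bigl(1+|y|^2\bigr).$$
The last step holds because expanding the right-hand side gives $2+2|x-y|^2+2|y|^2+2|x-y|^2|y|^2$, which dominates the middle expression term by term. This is exactly $\langle x\rangle^2\leq 2\langle x-y\rangle^2\langle y\rangle^2$.

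The argument is elementary and has no real obstacle. The only point needing care is that the constant $C_\beta=2^{\beta/2}$ is uniform in $x,y$ and depends only on $\beta$. This uniformity is what Lemma \ref{lemma2.2} uses.
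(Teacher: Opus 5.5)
Your proof is correct and follows essentially the same route as the paper, which writes $1+|x|^2=1+|x-y+y|^2\leq 2(1+|x-y|^2)(1+|y|^2)$ and then raises both sides to the power $\beta/2$. Your version also spells out the intermediate bound $|x|^2\leq 2|x-y|^2+2|y|^2$ and gives the explicit constant $C_\beta=2^{\beta/2}$, which the paper leaves implicit.
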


\begin{proof}
Indeed, $1+|x|^2=1+|x-y+y|^2\leq 2(1+|x-y|^2)(1+|y|^2)$ and so we get the conclusion.
\end{proof}

\begin{proposition}\label{appendix_parabolic_regularity}
Let $T>0$ and let $\tilde a = (\tilde a_{ij})_{i,j\in \{1,2,...,d\}}, \tilde b =(\tilde b_i )_{i\in \{1,2,...,d\}}, \tilde c$ be time-independent coefficients on $\R^d$. Assume that
\[
\tilde a=\tilde a^T,
\qquad
\tilde a\in W^{1,\infty}(\R^d;\R^{d\times d}),
\qquad
\tilde b\in L^\infty(\R^d;\R^d),
\qquad
\tilde c\in L^\infty(\R^d)\,,
\]
and that there exists a constant \(0<\lambda\) such that
\[
\lambda |\xi|^2\le \tilde a_{ij}(x)\xi_i\xi_j\ ,
\qquad\forall x\in\R^d\,,
\quad\forall \xi\in\R^d\,.
\]
If \(f\in L^2((0,T)\times\R^d)\) and \(v_0\in H^1(\R^d)\), then the Cauchy problem
$$\frac{\partial v}{\partial t}-\tilde a_{ij}\frac{\partial^2 v}{\partial x_i \partial x_j}-\tilde b_i\frac{\partial v}{\partial x_i}-\tilde cv=f,
\qquad v(0)=v_0,$$
has a unique solution satisfying
\[
v\in L^2(0,T;H^2(\R^d))\cap W^{1,2}(0,T;L^2(\R^d)).
\]
Moreover,
\[
\|v\|_{L^2(0,T;H^2(\R^d))}+
\|\frac{\partial v}{\partial t}\|_{L^2((0,T)\times\R^d)}
\le
C\left(
\|f\|_{L^2((0,T)\times\R^d)}+
\|v_0\|_{H^1(\R^d)}
\right),
\]
where \(C\) depends only on
\[
T,d,\lambda,
\|\tilde a\|_{W^{1,\infty}},
\|\tilde b\|_{L^\infty},
\|\tilde c\|_{L^\infty}.
\]
The analogous backward terminal-value problem follows from the change of variables \(s=T-t\).
\end{proposition}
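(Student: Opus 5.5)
We need to prove the global parabolic regularity statement (Proposition \ref{appendix_parabolic_regularity}): a non-divergence-form equation with $W^{1,\infty}$ leading coefficients, bounded lower-order terms, $L^2$ forcing and $H^1$ datum has a unique solution in $L^2(0,T;H^2)\cap W^{1,2}(0,T;L^2)$. The plan is to rewrite the equation in divergence form, obtain a weak solution by Lions' theorem, and then get the extra regularity from an energy estimate at the $H^1$ level combined with elliptic $H^2$ regularity at a.e. time.

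\textbf{Step 1 (divergence form and weak solution).} Since $\tilde a\in W^{1,\infty}$, write
\[
\tilde a_{ij}\partial_{ij}v=\partial_i(\tilde a_{ij}\partial_j v)-(\partial_i\tilde a_{ij})\partial_j v .
\]
The equation then becomes $\partial_t v-\partial_i(\tilde a_{ij}\partial_j v)+\hat b_j\partial_j v-\tilde c v=f$, with $\hat b_j=\partial_i\tilde a_{ij}-\tilde b_j\in L^\infty$. The associated bilinear form on $H^1$ is bounded and satisfies a G\aa rding inequality, with the first-order term absorbed by Young's inequality. As in the existence theorem of Section 3, Lions' theorem gives a unique weak solution $v\in L^2(0,T;H^1)\cap W^{1,2}(0,T;H^{-1})$. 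Uniqueness in the stronger class follows, since strong solutions are weak ones.

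\textbf{Step 2 (a priori $H^1$ estimate).} We work on Galerkin approximations, or on mollified coefficients and data, where all manipulations are legitimate, and pass to the limit at the end by weak compactness using the uniform bounds. Test the divergence-form equation with $\partial_t v$:
\[
\|\partial_t v\|_2^2+\frac12\frac{d}{dt}\int \tilde a_{ij}\partial_j v\,\partial_i v\,dx=\int\bigl(f-\hat b\cdot\nabla v+\tilde c v\bigr)\partial_t v\,dx .
\]
The time-independence of $\tilde a$ and its symmetry are used exactly here, so that the second term is an exact time derivative. By Young's inequality, ellipticity and Gronwall's lemma, we get a bound for $\sup_t\|\nabla v\|_2^2+\int_0^T\|\partial_t v\|_2^2$ by $C(\|f\|^2_{L^2}+\|v_0\|_{H^1}^2)$. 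The $L^2$ part of $v$ is controlled by Step 1's energy estimate.

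\textbf{Step 3 (spatial $H^2$ via elliptic regularity).} For a.e. $t$, $v(t)\in H^1$ solves the elliptic problem
\[
-\partial_i(\tilde a_{ij}\partial_j v)+v=g(t),\qquad g=f-\partial_t v-\hat b\cdot\nabla v+\tilde c v+v\in L^2 .
\]
Global $H^2$ regularity on $\R^d$ for divergence-form operators with uniformly elliptic Lipschitz coefficients follows by Nirenberg's difference-quotient method, which needs no boundary, only $\tilde a\in W^{1,\infty}$. It gives
\[
\|v(t)\|_{H^2}\le C\|g(t)\|_2 .
\]
Squaring, integrating in $t$, and inserting Step 2 yields the stated estimate, with $C$ depending only on $T,d,\lambda$ and the coefficient norms. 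The backward version follows from $s=T-t$.

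\textbf{Main obstacle.} The delicate point is justifying the test with $\partial_t v$ in Step 2, since a priori $\partial_t v$ is only in $L^2(0,T;H^{-1})$. I would handle it by a Galerkin scheme in $H^1$ with a basis of $H^2$ functions, where the identity holds exactly. An alternative is to mollify $v_0$, $f$ and the coefficients and pass to the limit using that the constants depend only on the listed norms, which are preserved under mollification. The difference-quotient argument in Step 3 is standard but must be checked to produce constants that are uniform on all of $\R^d$; this holds because translations preserve the $W^{1,\infty}$ and ellipticity bounds.
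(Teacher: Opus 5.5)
Your proposal is correct and follows essentially the same route as the paper. The paper combines an energy estimate with the global elliptic $H^2$ estimate at a.e. time and otherwise defers to Ladyzhenskaya--Solonnikov--Ural'tseva, Krylov and Lieberman; you supply the omitted details (divergence-form rewriting, Galerkin-justified testing with $\partial_t v$ using the symmetry and time-independence of $\tilde a$, and Nirenberg difference quotients on $\R^d$), and these steps are sound.
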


\begin{proof}
This is the standard global Sobolev regularity theorem for uniformly parabolic equations on the whole space with Lipschitz leading coefficients and bounded lower-order coefficients. More precisely, the statement follows from the classical $L^2$-theory for non-divergence form parabolic operators with time-independent coefficients, applied to
\[
\widetilde L v:=\widetilde a_{ij}(x)\frac{\partial^2 v}{\partial x_i\partial x_j}+\widetilde b_i(x)\frac{\partial v}{\partial x_i}+\widetilde c(x)v.
\]
The assumptions $\widetilde a=\widetilde a^T$, $\widetilde a\in W^{1,\infty}(\R^d)$, and uniform ellipticity imply the elliptic estimate
\[
\|v\|_{H^2(\R^d)}\le C\left(\|\widetilde L v\|_{L^2(\R^d)}+\|v\|_{L^2(\R^d)}\right),\qquad v\in H^2(\R^d),
\]
where the lower-order terms are controlled by $\|\widetilde b\|_{L^\infty}$ and $\|\widetilde c\|_{L^\infty}$. Combining this elliptic estimate with the standard energy estimate for $\displaystyle \frac{\partial v}{\partial t}-\widetilde L v=f$ gives the asserted maximal $L^2$-regularity estimate. The result is stated, in equivalent forms, in Ladyzhenskaya-Solonnikov-Ural'tseva \cite[Chapter IV]{Ladyzhenskaya1968}, Krylov \cite[Chapters 2 and 3]{Krylov2008}, and Lieberman \cite[Chapters VII and VIII]{Lieberman1996}. The terminal-value problem is reduced to the initial-value problem by the change of variables $s=T-t$.
\end{proof}

We record explicitly why Proposition \ref{appendix_parabolic_regularity} applies in the present paper. For the adjoint equation \eqref{eqp}, after the change of variables $r(s,x)=p(T-s,x)$, one has
\[
\widetilde a(x)=\frac12 q(x,\mathcal S u^*(x))I,
\qquad
\widetilde b(x)=b(x,\mathcal S u^*(x)),
\qquad
\widetilde c(x)=0.
\]
The uniform ellipticity follows from (H2), and $\widetilde a\in W^{1,\infty}(\R^d)$ follows from (H2), $\mathcal S u^*\in W^{1,\infty}(\R^d)$, and the identity
\[
\nabla \bigl(q(x,\mathcal S u^*(x))\bigr)=
\nabla_x q(x,\mathcal S u^*(x))+\frac{\partial q}{\partial s}(x,\mathcal S u^*(x))
\nabla(\mathcal S u^*)(x).
\]
For the Fokker-Planck equation in the scalar case $q_{ij}=q\delta_{ij}$, rewriting the equation in non-divergence form gives
\[
\frac{\partial \rho}{\partial t}-\frac12 q(\mathcal S u^*)\Delta\rho-\bigl(\nabla (q(\mathcal S u^*))-b(\mathcal S u^*)\bigr)\cdot\nabla\rho-
\left(\frac12\Delta (q(\mathcal S u^*))-\nabla\cdot (b(\mathcal S u^*))\right)\rho=0.
\]
The coefficient $\widetilde b=\nabla( q(\mathcal S u^*))-b(\mathcal S u^*)$ is bounded. Moreover,
\[
\Delta (q(x,\mathcal (S u^*)(x)))=
\Delta_x q(x,(\mathcal S u^*)(x))+2\nabla_x\left( \frac{\partial q}{\partial s}\right) (x,(\mathcal S u^*)(x))\cdot
\nabla(\mathcal S u^*)(x)
\]
\[
+\frac{\partial^2 q}{\partial s ^2}(x,(\mathcal S u^*)(x))|\nabla(\mathcal S u^*)(x)|^2+
\frac{\partial q}{\partial s}(x,(\mathcal S u^*)(x))\Delta(\mathcal S u^*)(x),
\]
and every term is bounded because $q\in C^2_b$, $\nabla(\mathcal S u^*)\in L^\infty$, and $\Delta(\mathcal S u^*)=\mathcal S u^*-u^*\in L^\infty$. The same argument, using $b\in C_b^{1,2}$, gives $\nabla\cdot (b(x,(\mathcal S u^*)(x)))\in L^\infty$. Thus all coefficients entering both equations satisfy the assumptions of Proposition \ref{appendix_parabolic_regularity}.

\section*{Declarations}

\subsection*{\bf Funding} The authors declare that no funds, grants or other support were received during the preparation of this manuscript.

\subsection*{\bf Competing interests} The authors declare that they have no relevant financial or non-financial interests to disclose.

\subsection*{\bf Availability of data and material} Not applicable

\subsection*{\bf Code availability} Not applicable


\begin{thebibliography}{99}

\bibitem{Anita2021} 
\c{S}.-L. Ani\c{t}a, 
Optimal control of stochastic differential equations via Fokker-Planck equations, \it Appl. Math. Optim. \rm 84 (Suppl. 2) (2021), 1555-1583.

\bibitem{Anita2024_1} 
\c{S}.-L. Ani\c{t}a, 
Controlling a nonlinear Fokker-Planck equation via inputs with nonlocal action, \it Appl. Math. Optim. \rm 89 (3) (2024), 68.

\bibitem{anita3} 
\c{S}.-L. Ani\c{t}a, 
Controlling a generalized Fokker-Planck equation via inputs with nonlocal action, \it Nonlin. Anal. \rm 241 (2024), 113476.

\bibitem{AnitaDiPersio2025}
\c{S}.-L. Ani\c{t}a, L. Di Persio,
Optimal control for diffusion processes: existence, maximum principle and financial applications, \it arXiv:2503.05912 [math.OC] \rm (2025).

\bibitem{MarioAlfioJSC}
M. Annunziato, A. Borzì,
A {F}okker-{P}lanck approach to the reconstruction of a cell membrane potential,
\it SIAM J. Sci. Comput. \rm 43 (3) (2021), B623-B649.

\bibitem{MarioAlfioMCA}
M. Annunziato, A. Borz\`{\i},
{F}okker-{P}lanck analysis of superresolution microscopy images,
\it Math. Comput. Appl. \rm 28 (6) (2023), 113.

\bibitem{Barbu1994}
V. Barbu,  
\it Mathematical Methods in Optimization of Differential Systems, \rm Springer, 1994. 

\bibitem{Barbu2020} 
V. Barbu, 
Optimal feedback controllers for a stochastic differential equation with reflection, \it SIAM J. Control Optim. \rm 58 (2) (2020), 986-997.

\bibitem{BarbuRZ2020} 
V. Barbu, M. R\"{o}ckner, D. Zhang, 
Optimal control of nonlinear stochastic differential equations on Hilbert spaces, \it SIAM J. Control Optim. \rm 58 (4) (2020), 2383-2410. 

\bibitem{Brezis2005}
H. Br\'{e}zis,
\it Analyse Fonctionnelle. Th\' {e}orie et Applications, \rm Dunod, Paris, 2005.

\bibitem{Dipersio} 
L. Di Persio, P. Kuchling, 
Optimal control of McKean-Vlasov equations with controlled stochasticity, \it Evol. Eqs. Control Theory \rm 14 (2) (2025), 367-387.

\bibitem{Evans}
L.C. Evans,
\it Partial Differential Equations, 2nd ed., \rm
AMS, 2010.

\bibitem{Figalli2008}
A. Figalli,
Existence and uniqueness of martingale solutions for SDEs with rough or degenerate coefficients,
\it J. Funct. Anal. \rm 254 (2008), 109-153.

\bibitem{Huang2021} 
X. Huang, P. Ren, F.-Y. Wang, 
Distribution dependent stochastic differential equations, \it Front. Math. China \rm 16 (2021), 257-301.


\bibitem{Krylov2008}
N.V. Krylov,
\it Lectures on Elliptic and Parabolic Equations in Sobolev Spaces, \rm Graduate Studies in Mathematics, Vol. 96, American Mathematical Society, 2008.

\bibitem{Ladyzhenskaya1968}
O.A. Ladyzhenskaya, V.A. Solonnikov, N.N. Ural'tseva,
\it Linear and Quasilinear Equations of Parabolic Type, \rm Translations of Mathematical Monographs, Vol. 23, American Mathematical Society, Providence, 1968.

\bibitem{Lieberman1996}
G.M. Lieberman,
\it Second Order Parabolic Differential Equations, \rm World Scientific, 1996.

\bibitem{Lions}
J.-L. Lions,
\it Quelques m\' {e}thodes de r\' {e}solution des probl\`{e}mes aux limites non lin\'{e}aires,
\rm Dunod, 1969.

\bibitem{oksendal} 
B. {{\O{}}}ksendal, 
\it Stochastic Differential Equations. An Introduction with Applications, \rm Fifth Edition, Springer, Berlin, 1998.

\bibitem{RoecknerZhang2010}
M. R\"{o}ckner, X. Zhang,
Weak uniqueness of Fokker–Planck equations with degenerate and bounded coefficients,
\it C.R. Acad. Sci. Paris \rm 348 (2010), 435-438.

\bibitem{Sumin2009}
M.I. Sumin, 
The first variation and Pontryagin's maximum principle in optimal control for partial differential equations,
\it Comput. Math., Math. Phys. \rm 49 (6) (2009), 958-978.

\bibitem{Trevisan2016} 
D. Trevisan, 
Well-posedness of multidimensional diffusion processes with weakly differentiable coefficients, 
\it Electron. J. Probab. \rm 21 (2016), 1-41.

\end{thebibliography}
\end{document}